\newcommand{\R}{\mathds R}
\newcommand{\Z}{\mathds Z}
\newcommand{\Pp}{\mathds P}
\newcommand{\Ee}{\mathds E}
\newcommand{\I}{\mathds 1}
\def\<{\langle}
\def\>{\rangle}
\newcommand{\D}{\textup{div}}
\newcommand{\leb}{\textup{Leb}}
\newtheorem{theorem}{Theorem}[section]
\newtheorem{lemma}[theorem]{Lemma}
\newtheorem{proposition}[theorem]{Proposition}
\newtheorem{corollary}[theorem]{Corollary}
\numberwithin{equation}{section}
\theoremstyle{definition}
\newtheorem{remark}[theorem]{Remark}
\begin{document}
\allowdisplaybreaks
\title[Approximation of heavy-tailed distributions] {\bfseries
Approximation of heavy-tailed distributions via stable-driven SDEs}

\author{Lu-Jing Huang\qquad Mateusz B. Majka \qquad  Jian Wang}

\thanks{\emph{L-J Huang:}
 College of Mathematics and Informatics, Fujian Normal University, 350007 Fuzhou, P.R. China.
   \texttt{lujingh@yeah.net}}

\thanks{\emph{M.\ B.\ Majka:}
School of Mathematical and Computer Sciences, Heriot-Watt University, Edinburgh, EH14 4AS, UK. \texttt{m.majka@hw.ac.uk}}

\thanks{\emph{J. Wang:}
 College of Mathematics and Informatics \& Fujian Key Laboratory of Mathematical
Analysis and Applications (FJKLMAA) \& Center for Applied Mathematics of Fujian Province (FJNU), Fujian Normal University, 350007 Fuzhou, P.R. China.
   \texttt{jianwang@fjnu.edu.cn}}

\date{}
\maketitle

\begin{abstract}

Constructions of numerous approximate sampling algorithms are based on the well-known fact that certain Gibbs measures are stationary distributions of ergodic stochastic differential equations (SDEs) driven by the Brownian motion. However, for some heavy-tailed distributions it can be shown that the associated SDE is not exponentially ergodic and that related sampling algorithms may perform poorly. A natural idea that has recently been explored in the machine learning literature in this context is to make use of stochastic processes with heavy tails instead of the Brownian motion. In this paper we provide a rigorous theoretical framework for studying the problem of approximating heavy-tailed distributions via ergodic SDEs driven by symmetric (rotationally invariant) $\alpha$-stable processes.

\noindent \textbf{Keywords:} stochastic differential equations, symmetric $\alpha$-stable processes, invariant measures, heavy-tailed distributions, approximate sampling,
fractional Langevin Monte Carlo.

\medskip

\noindent \textbf{MSC 2020:}
 60G51; 60G52; 60J25; 60H10.
\end{abstract}
\allowdisplaybreaks

\section{Introduction}\label{section1}
Suppose we are given a probability distribution $\mu$ on $\R^d$ defined via
\begin{equation}\label{Gibbs}
\mu(dx) = Z^{-1} \exp \left( -V(x) \right) dx \,,
\end{equation}
where $V: \R^d \to \R$ is the potential, and $Z:= \int_{\R^d} \exp \left( -V(x) \right) dx$ is the normalizing constant. The goal in approximate sampling is to generate a sequence of probability measures $(\mu_k)_{k\ge1}$ such that for sufficiently large $k$ the measure $\mu_k$ constitutes a good approximation of $\mu$. This can be achieved e.g.\ by
utilizing a
stochastic process
with the unique stationary distribution $\mu$.
If we can show that this process is exponentially ergodic, then
we can use it to construct an algorithm for approximate sampling from $\mu$ that,
under some assumptions on $V$ in \eqref{Gibbs},
converges exponentially fast regardless of its initial condition.

A commonly used example of such a process is the solution $(X_t)_{t \ge 0}$ to the (overdamped) Langevin SDE
\begin{equation}\label{LangevinSDE}
dX_t = -
\nabla V(X_t)\, dt +
\sqrt{2}\,dB_t \,,
\end{equation}
where $(B_t)_{t \ge 0}$ is the standard Brownian motion in $\R^d$. If the potential $V$ is sufficiently regular,
it can be easily shown that $\mu$ given by \eqref{Gibbs} is a stationary distribution of $(X_t)_{t \ge 0}$. Moreover, there are many results on the exponential ergodicity of \eqref{LangevinSDE} under relatively weak dissipativity conditions on $V$, see e.g.\ \cite{Kha} and the references therein for approaches based on Lyapunov-type drift conditions, the monographs \cite{BGL, MF, WangBook} for methods based on functional inequalities, and \cite{MF, WangBook} for probabilistic coupling techniques (in particular, \cite{Eberle2016, EberleGuillinZimmer2019} for a recent study on this topic).

There are numerous sampling algorithms in the literature that are based on Euler discretizations of \eqref{LangevinSDE}, cf.\ \cite{EberleMajka2019, MajkaMijatovicSzpruch2018} and the references therein. The analysis of their performance is often carried out by bounding the discretization error between the Euler scheme and the SDE, and then by directly employing ergodicity results for SDEs, see e.g.\ \cite{ChengJordan2018, Dal, DurmusMoulines2017, MFWB}. Hence the analysis of convergence
of the SDE is an important first step towards evaluating performance of such algorithms, and one usually
 cannot expect fast convergence of the algorithm without fast convergence of the associated SDE, see \cite{RobertsTweedie1996}
(with some possible exceptions discussed in \cite{Erdogdu2020}).

However, in \cite{RobertsTweedie1996} (see Theorem 2.4 and Section 2.3 therein) it has been shown that the solution to \eqref{LangevinSDE} may not be exponentially ergodic if the distribution $\mu$ defined in \eqref{Gibbs} is heavy-tailed.
Indeed, it is known that the Langevin SDE \eqref{LangevinSDE} has the generator
$Lf:=\Delta f -\nabla V \cdot\nabla f$
 which is a symmetric operator on $L^2(\R^d; \mu)$, and that the Poincar\'e inequality for $L$ (which is equivalent to the exponential ergodicity of the SDE \eqref{LangevinSDE}) implies exponential tails of $\mu$; see \cite[Theorems 1.1.1 and 1.2.5]{WangBook}. However, for heavy-tailed $\mu$, one can only expect weak-Poincar\'e inequalities, which indicates that the solution to \eqref{LangevinSDE} only converges with a polynomial
or a subexponential rate;
see \cite[Chapter 4]{WangBook} for more details.
A very natural question to ask in this context is whether instead of \eqref{LangevinSDE} one could use SDEs driven by other stochastic processes, with tails better suited for the task of approximating heavy-tailed $\mu$.

The first steps
in that direction have been taken in \cite{Simsekli2017, NguyenSimsekli2019}
(see also \cite{SimsekliZhuTeh, YeZhu} for further extensions).
The idea there is based on the fact that $\mu$ given by \eqref{Gibbs} can be shown to be a stationary distribution of
\begin{equation}\label{stableSDE}
dX_t = b(X_t)\,dt + dZ_t \,,
\end{equation}
where $(Z_t)_{t \ge 0}$ is the symmetric (rotationally invariant) $\alpha$-stable process in $\R^d$ with $d\ge1$ and $\alpha\in (1,2)$, and the drift $b(x)$ is given by
\begin{equation}\label{specialDrift}
b(x) = - {C_{d,2-\alpha}}e^{V(x)} \int_{\R^d} \frac{e^{-V(y)} \nabla V(y)}{|x-y|^{d-(2-\alpha)}}\, dy \,,
\end{equation} where
the potential
$V\in C^1(\R^d)$ is such that $e^{-V}|\nabla V|\in L^1(\R^d;dx) \cap C_b(\R^d)$,
 and
$C_{d,\alpha} := \Gamma((d-\alpha)/2)/(2^{\alpha} \pi^{d/2} \Gamma(\alpha/2) )$.
Hence, if the SDE \eqref{stableSDE} is exponentially ergodic, one could use an algorithm based on its discretization to obtain a new alternative way of approximating $\mu$ (possibly faster than algorithms based on \eqref{LangevinSDE} if $\mu$ is heavy-tailed).
The authors of \cite{Simsekli2017, NguyenSimsekli2019} called their approach Fractional Langevin Monte Carlo due to a possible interpretation of the drift \eqref{specialDrift} in terms of the Riesz potential, which is an inverse operator to the fractional Laplacian, see e.g., \cite[Section 2.7]{Kwa} and the references therein.

There are, however, several challenges to this approach, related both to verifying theoretical properties of the SDE \eqref{stableSDE} and to finding its appropriate discrete-time counterpart for use in simulations. In the present paper we focus on the former, in response to some questions that were left unanswered in \cite{Simsekli2017, NguyenSimsekli2019}. Indeed, the exponential ergodicity of \eqref{stableSDE}
has been checked in \cite{Simsekli2017, NguyenSimsekli2019} only under some very special and difficult to verify assumptions.
As we will see in Section \ref{sectionProperties}, the drift $b(x)$ defined by \eqref{specialDrift} seems to be in general only locally $(2-\alpha)$-H\"{o}lder continuous, while in the setting of \cite{Simsekli2017, NguyenSimsekli2019} it is assumed to be Lipschitz continuous
and differentiable.
Moreover, the authors of \cite{NguyenSimsekli2019} assume that $b(x)$ satisfies a contractivity at infinity condition $\langle b(x) - b(y) , x - y \rangle \le - K|x-y|^2$ for all $x$, $y \in \R^d$ such that $|x - y| > R$, with some constants $K$, $R > 0$ (cf.\ \cite[Assumption (H5) and Proposition 1]{NguyenSimsekli2019}), which also seems to be unverifiable in the general case. The lack of all these properties of $b(x)$ makes it impossible to prove the exponential ergodicity of \eqref{stableSDE} by utilizing results from the existing literature (see e.g.\ \cite{LiangMajkaWang2019} for some recent developments in this topic).
Furthermore, because of the unusual form of \eqref{specialDrift}, it is not even immediately clear whether \eqref{stableSDE} has a unique, non-explosive strong solution, which also has not been verified in \cite{Simsekli2017, NguyenSimsekli2019}.
Finally, due to non-differentiability of $b(x)$, the proof that $\mu$ given by \eqref{Gibbs} is the unique invariant probability measure for \eqref{stableSDE} cannot be as straightforward as  in \cite[Theorem 1.1]{Simsekli2017} or \cite[Theorem 1.1]{YeZhu}.
In the present paper we fill
all these gaps
by carefully deriving appropriate bounds on \eqref{specialDrift}, and by proving
all the properties of \eqref{stableSDE} mentioned above
in a rigorous way. In particular, we study the drift term $b(x)$ defined by \eqref{specialDrift} for all $d>2-\alpha$
(not only for the case of $d\ge1$ and $\alpha\in (1,2)$),
 and we  define a new drift term to treat the case of $d\le 2-\alpha$.
To this end, we will use the notion of the fractional Laplace operator (see e.g.\ \cite{BBKRSV, BV, Kwa} and the references therein), which is defined for all
$f \in C_b^2(\R^d)$ by
$$-(-\Delta)^{\alpha/2} f(x) := c_{d,\alpha} \lim_{\varepsilon \to
0}  \int_{\{ |y-x| > \varepsilon \}} \frac{f(y)-f(x)}{|y-x|^{d+\alpha}} dy,$$
where $c_{d,\alpha} := 2^{\alpha}\Gamma((d+\alpha)/2)/( \pi^{d/2}|\Gamma(-\alpha/2)|)= \alpha2^{\alpha-1}\Gamma((d+\alpha)/2)/( \pi^{d/2} \Gamma(1-\alpha/2) )$.
See e.g.\ \cite[formulas (1.3) and (1.35)]{BBKRSV} or \cite[Definition 2.5]{Kwa},
and note that $c_{d,\alpha} =
|C_{d,-\alpha}|$.
Then, in order to cover the case of $d\le 2-\alpha$, i.e., $d=1$ and $\alpha\in (0,1]$, we will work with the drift
\begin{equation}\label{e:one}
b(x)=-e^{V(x)}\int_{-\infty}^x (-\Delta)^{\alpha/2} e^{-V(u)}\,du,\quad x\in \R.
\end{equation}
Everywhere in this paper, we will be concerned with
the SDE \eqref{stableSDE} driven by a symmetric $\alpha$-stable process $(Z_t)_{t\ge0}$ on $\R^d$ with $\alpha\in (0,2)$, where the drift term $b(x)$ is defined by \eqref{specialDrift} when $d>2-\alpha$, and by \eqref{e:one} when $d\le 2-\alpha$.
We will refer to $b(x)$ as the fractional drift in both cases.
We will comment on some possible approaches to the problem of discretization of \eqref{stableSDE} in Remark \ref{remarkDiscrete}. However, our focus in this paper is the analysis of the SDE \eqref{stableSDE}, and we leave a more detailed discussion of discrete-time algorithms for future work.

For our main result, we require that
the following assumption on the potential $V$ is satisfied.

{\noindent{\bf Assumption (A)}\it\,\,
$V$ is a radial function on $\R^d$ $($and hence, by a slight abuse of notation, we write $V(x) = V(|x|)$ for all $x \in \R^d$$)$  such that
\begin{equation}\label{e:1.1}\limsup_{r\to\infty}[ e^{-V(r)} r^{d+\alpha}]<\infty,\end{equation} and
one of the following two conditions is satisfied:
\begin{itemize}
\item[{\rm(i)}] when $d>2-\alpha$,  $V\in C^1(\R^d)$, $e^{-V}|\nabla V|\in L^1(\R^d;dx)\cap C_b(\R^d)$,
        \begin{equation}\label{e:logconc}
		r_0 := \sup \{ r > 0 : V'(r) \le 0 \}<\infty,
		\end{equation}
			 and \begin{equation}\label{e:1.2}\int_{0}^\infty e^{-V(r)} |V'(r)| r^d\,dr<\infty,\quad \int_0^\infty e^{-V(r)} V'(r) r^d\,dr >0.\end{equation}
\item[{\rm(ii)}] when $d\le 2-\alpha$,
 $V\in C^2(\R)$, $e^{-V}\in L^1(\R;dx)\cap C_b^2(\R)$,
$$
 \limsup_{x\rightarrow \infty}[x^3e^{-V(x)}|V'(x)^2-V''(x)|]<\infty,
$$
 and
$$  \liminf_{x\rightarrow \infty}[x^3e^{-V(x)}(V'(x)^2-V''(x))]\ge0.$$
\end{itemize} }

We have the following result.

\begin{theorem}\label{mainResult}
	Under Assumption {\rm(A)}, the {\rm SDE} \eqref{stableSDE}
with the fractional drift $b(x)$
given by \eqref{specialDrift} when $d>2-\alpha$, and by \eqref{e:one} when $d\le 2-\alpha$,
has a unique non-explosive strong solution $X:=(X_t)_{t\ge0}$ such that the process $X$ is exponentially
 ergodic with the unique invariant probability measure $\mu$ given by \eqref{Gibbs}.
 More explicitly, for any $\beta\in [0,\alpha)$, there is a constant $\lambda>0$ such that for any $X_0 \sim \mu_0$ with finite $\beta$-moment and any $t>0$,
$$\|\mathcal{L}(X_t)-\mu\|_{{\rm Var},V_0}:=\sup_{|f|\le V_0} \left|\int_{\R^d}\Ee^xf(X_t)\,\mu_0(dx) -\mu(f)\right|\le C(\mu_0) e^{-\lambda t},$$
where $V_0(x)=(1+|x|)^{\beta}$, $C(\mu_0)$ is a positive constant,
 and $\mathcal{L}(X_t) $ denotes the distribution of $X_t$ for every $t>0$.
\end{theorem}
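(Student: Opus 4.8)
The strategy has four stages: sharp estimates on the fractional drift $b$; well-posedness and non-explosion of \eqref{stableSDE}; a Foster--Lyapunov argument for exponential ergodicity; and identification of the invariant measure. Throughout, write $\mathcal{L}f=\langle b,\nabla f\rangle-(-\Delta)^{\alpha/2}f$ for the generator of \eqref{stableSDE}, with $(-\Delta)^{\alpha/2}$ normalized as in the Introduction.

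\emph{Stage 1 (drift estimates, carried out in Section \ref{sectionProperties}).} When $d>2-\alpha$ one has $b(x)e^{-V(x)}=-C_{d,2-\alpha}\int_{\R^d}|x-y|^{-(d-(2-\alpha))}e^{-V(y)}\nabla V(y)\,dy=\nabla\big[(-\Delta)^{-(2-\alpha)/2}e^{-V}\big](x)$, which exhibits $be^{-V}$ as the gradient of a Riesz potential of the bounded integrable function $e^{-V}$; since $V$ is radial, $b(x)=h(|x|)x/|x|$. The order-$(2-\alpha)$ smoothing of the Riesz potential shows $b$ is bounded on compacts and locally $(2-\alpha)$-Hölder continuous (when $d\le2-\alpha$ the drift \eqref{e:one} is $C^1$, since $e^{-V}\in C^2_b$ forces $(-\Delta)^{\alpha/2}e^{-V}\in C_b$). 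The decisive point is the behaviour at infinity: expanding the Riesz kernel for large $|x|$, the leading term vanishes by radial symmetry, and the next term gives $b(x)e^{-V(x)}\sim -c\big(\int_0^\infty e^{-V(r)}V'(r)r^d\,dr\big)\,x|x|^{-(d+\alpha)}$ with $c>0$; positivity of this integral (from \eqref{e:1.2}) makes $b$ point inward, while \eqref{e:1.1}, i.e.\ $e^{V(r)}\gtrsim r^{d+\alpha}$, upgrades this to a linear lower bound: there exist $R_0,c_0>0$ with $\langle b(x),x\rangle\le-c_0|x|^2$ and $|b(x)|\ge c_0|x|$ for $|x|\ge R_0$. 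The remainder terms are controlled precisely by $e^{-V}|\nabla V|\in L^1$ and the integrability in \eqref{e:1.2}; when $d\le2-\alpha$ the analogous conclusions follow from the hypotheses on $x^3e^{-V(x)}(V'(x)^2-V''(x))$, which govern the tail of $(-\Delta)^{\alpha/2}e^{-V}$ and hence of \eqref{e:one}.

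\emph{Stages 2--3 (well-posedness and ergodicity).} Since $2-\alpha>1-\alpha/2$ for every $\alpha\in(0,2)$, the local boundedness and local $(2-\alpha)$-Hölder continuity of $b$ (respectively its $C^1$-regularity when $d\le2-\alpha$) bring \eqref{stableSDE} within reach of the well-posedness theory for $\alpha$-stable-driven SDEs with Hölder drift, giving pathwise uniqueness and a local strong solution. For the Lyapunov step take $W=V_0$, $W(x)=(1+|x|)^\beta$ with $\beta\in[0,\alpha)$: for such $\beta$, $(-\Delta)^{\alpha/2}W$ is well defined, bounded, and vanishes at infinity, while $\langle b(x),\nabla W(x)\rangle=\beta(1+|x|)^{\beta-1}|x|^{-1}\langle b(x),x\rangle\le-c_0\beta(1+|x|)^{\beta-1}|x|$ for $|x|\ge R_0$, which dominates $-\tfrac12c_0\beta\,W(x)$ once $|x|$ is large; hence $\mathcal{L}W\le-\lambda W+C\mathbf{1}_{K}$ for some compact $K$ and $\lambda>0$. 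In particular $\mathcal{L}W\le CW$ outside a compact set, so a localization along exit times from balls rules out explosion, and $X$ is a non-explosive strong Markov process. As the $\alpha$-stable noise is non-degenerate with full support, \eqref{stableSDE} has a transition density bounded below on compact sets over bounded time intervals and is open-set irreducible, so every compact set is petite; the Down--Meyn--Tweedie $W$-ergodic theorem then yields a unique invariant probability measure $\pi$ together with $\|P_t(x,\cdot)-\pi\|_{{\rm Var},V_0}\le C\,V_0(x)e^{-\lambda t}$, and integrating in $x$ against $\mu_0$ gives the stated estimate with $C(\mu_0)=C\mu_0(V_0)<\infty$.

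\emph{Stage 4 (identification $\pi=\mu$) and the main obstacle.} By the very construction of the drift, $\D(b\,e^{-V})=-(-\Delta)^{\alpha/2}e^{-V}$ pointwise (differentiate the identity from Stage 1, or read it off from \eqref{e:one}); hence, for every $f\in C_c^\infty(\R^d)$, integration by parts and self-adjointness of $(-\Delta)^{\alpha/2}$ give
\[
\int_{\R^d}\mathcal{L}f\,e^{-V}\,dx=-\int_{\R^d}f\,\D(be^{-V})\,dx-\int_{\R^d}f\,(-\Delta)^{\alpha/2}e^{-V}\,dx=0 ,
\]
so $\mu$ is infinitesimally invariant for $(\mathcal{L},C_c^\infty)$; since pathwise uniqueness (Stage 2) makes the corresponding martingale problem well posed, the Echeverr\'ia--Weiss theorem shows that $\mu$ is an invariant probability measure, and uniqueness forces $\pi=\mu$. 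I expect two genuine difficulties. The first is Stage 1: although radial symmetry reduces the drift analysis to a one-dimensional asymptotic study of a Riesz potential, making the remainder estimates at infinity rigorous — and uniform in the parameters, including the separate regime $d\le2-\alpha$ — requires careful exploitation of exactly the integrability and sign conditions bundled into Assumption (A). The second is Stage 4: because $b$ is merely Hölder continuous, $\mathcal{L}^*\mu=0$ cannot be checked by a naive pointwise computation as in the differentiable case treated in \cite{Simsekli2017, YeZhu}, so the argument must proceed through the weak identity above together with well-posedness of the martingale problem.
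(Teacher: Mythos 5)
Your proposal tracks the paper's overall strategy (drift estimates, Hölder-drift well-posedness, a Foster--Lyapunov argument feeding into Meyn--Tweedie, identification of $\mu$), but Stage 4 has a genuine gap that you raise yourself and then do not actually resolve. Your display
\[
\int_{\R^d}\mathcal{L}f\,e^{-V}\,dx=-\int_{\R^d}f\,\D(be^{-V})\,dx-\int_{\R^d}f\,(-\Delta)^{\alpha/2}e^{-V}\,dx=0
\]
is obtained by integrating $\int\langle b e^{-V},\nabla f\rangle\,dx$ by parts onto the vector field $be^{-V}$, and then appealing to the ``pointwise'' identity $\D(be^{-V})=-(-\Delta)^{\alpha/2}e^{-V}$. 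But for $d>2-\alpha$ and $\alpha\in(1,2)$ the drift $b$ --- and hence $be^{-V}=\nabla[(-\Delta)^{-(1-\alpha/2)}e^{-V}]$ --- is only locally $(2-\alpha)$-H\"older (Lemma \ref{L:lemma0}, Remark \ref{remark:notC1}), so $\D(be^{-V})$ is not defined pointwise and the integration by parts you perform against $be^{-V}$ is not justified; this is exactly the obstruction flagged in Remark \ref{e:remk--}. Calling it ``the weak identity'' does not make the step rigorous: the statement $\D(be^{-V})=-(-\Delta)^{\alpha/2}e^{-V}$, even in the distributional sense, is precisely what must be proved, not the starting point, and the paper never asserts it. Instead, Proposition \ref{prop:inv} keeps all derivatives on the test function: it writes $\int\langle\nabla f,\nabla g\rangle\,dx=\int(-\Delta f)\,g\,dx$ with $g=C_{d,2-\alpha}\int e^{-V(y)}|\cdot-y|^{-(d+\alpha-2)}\,dy$, factors $(-\Delta)=(-\Delta)^{\alpha/2}(-\Delta)^{1-\alpha/2}$, uses self-adjointness of $(-\Delta)^{1-\alpha/2}$ and the Green function identity $(-\Delta)^{1-\alpha/2}g=e^{-V}$, and thereby never needs to differentiate $be^{-V}$ at all. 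You should rework Stage 4 along those lines (the case $d\le2-\alpha$, where $b\in C^1$, is handled separately in the paper via the integral identity $\int_0^\infty(-\Delta)^{\alpha/2}e^{-V(u)}\,du=0$). Likewise, $\mu(Lf)=0$ for $f\in C_c^\infty$ forces invariance only once one knows $C_c^\infty$ sits inside a core of $L$; the paper obtains this by the same localization with truncated drifts $b_n$ that underlies its strong Feller and irreducibility proof (Lemma \ref{sde}), a step your proposal asserts rather than establishes.

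Two lesser points. First, in Stage 3 the claim that $(-\Delta)^{\alpha/2}W$ is bounded for $W(x)\asymp(1+|x|)^\beta$ with $\beta<\alpha$ is imprecise: the paper splits the jump integral at a radius $l$ and obtains a bound of the form $c_1 l^{2-\alpha}+c_2 l^{-\alpha}(1+|x|^2)^{\beta/2}+c_3$, which is then absorbed by the drift term after choosing $|x|\gg l\gg1$; without that device the tail contribution grows like $|x|^\beta$. Second, the derivation of the linear lower bound $\langle b(x),x\rangle\le -c_0|x|^2$ and $|b(x)|\ge c_0|x|$ from the asymptotic expansion of the Riesz kernel is the correct heuristic, but the paper's proof of the matching two-sided bound (Lemmas \ref{Lemma3.1} and \ref{Lemma3.2}) goes via a direct decomposition of the integral using convexity of $r\mapsto r^{-(d+\alpha-2)/2}$ rather than a series expansion with remainder control, and those remainder estimates are non-trivial even for radial $V$ --- worth keeping in mind if you intend to fill in Stage 1.
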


Note that the  weighted total variation distance $\| \cdot \|_{\operatorname{Var},V_0}$ from Theorem \ref{mainResult} dominates both the standard total variation and the $L^\beta$-Wasserstein distance (see e.g. \cite[Remark 2.3]{EberleGuillinZimmer2019}). Therefore we have the following immediate corollary.

\begin{corollary}
	 Under Assumption {\rm(A)}, the process $X:=(X_t)_{t\ge0}$ solving \eqref{stableSDE} is exponentially
	ergodic with the unique invariant probability measure $\mu$ given by \eqref{Gibbs} in the total variation norm for all $d\ge 1$ and $\alpha\in (0,2)$, and in the $L^1$-Wasserstein distance when $d\ge 1$ and $\alpha\in (1,2)$.
\end{corollary}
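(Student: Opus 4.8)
\emph{Proof proposal.} The plan is to obtain the Corollary as a direct specialization of Theorem~\ref{mainResult}, combined with the elementary fact that the weighted total variation distance $\|\cdot\|_{{\rm Var},V_0}$ with $V_0(x)=(1+|x|)^\beta$ dominates the standard total variation distance (for $\beta=0$) and the $L^\beta$-Wasserstein distance (for $0<\beta\le1$). Since Theorem~\ref{mainResult} is already available, this is a short deduction rather than a genuinely new argument; the statement itself flags it as ``immediate''.

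First I would record the two comparison inequalities, which are exactly those of \cite[Remark 2.3]{EberleGuillinZimmer2019}. For $\beta=0$ one has $V_0\equiv1$, so $\|\nu_1-\nu_2\|_{{\rm Var},V_0}=\sup_{|f|\le1}|\nu_1(f)-\nu_2(f)|$, which dominates the total variation distance $\|\nu_1-\nu_2\|_{{\rm TV}}$ up to a universal constant. For $\beta=1$ one has $V_0(x)=1+|x|$; by the Kantorovich--Rubinstein duality any $1$-Lipschitz $f$ may be normalised so that $f(0)=0$ (which leaves $\nu_1(f)-\nu_2(f)$ unchanged), whence $|f(x)|\le|x|\le V_0(x)$, and therefore $W_1(\nu_1,\nu_2)=\sup_{{\rm Lip}(f)\le1}|\nu_1(f)-\nu_2(f)|\le\|\nu_1-\nu_2\|_{{\rm Var},V_0}$ whenever $\nu_1,\nu_2$ have finite first moments.

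For the total variation claim, fix $d\ge1$ and $\alpha\in(0,2)$ and apply Theorem~\ref{mainResult} with $\beta=0$: every probability measure has a finite $0$-moment, so the hypothesis on $\mu_0$ is vacuous, and the first comparison above yields $\|\mathcal{L}(X_t)-\mu\|_{{\rm TV}}\le C(\mu_0)e^{-\lambda t}$ for every initial law $\mu_0$. For the $L^1$-Wasserstein claim, restrict to $d\ge1$ and $\alpha\in(1,2)$ so that $\beta=1$ is admissible in $[0,\alpha)$. Here I would first note that $\mu$ itself has a finite first moment: by \eqref{e:1.1} the radial density obeys $e^{-V(r)}r^{d+\alpha}\le C$ for large $r$, so $\int_{\R^d}|x|\,\mu(dx)$ is a constant multiple of $\int_0^\infty e^{-V(r)}r^d\,dr$, whose integrand is $O(r^{-\alpha})$ at infinity and hence integrable precisely because $\alpha>1$; thus $W_1(\cdot,\mu)$ is well defined on measures with finite first moment. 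Theorem~\ref{mainResult} with $\beta=1$ together with the second comparison then gives $W_1(\mathcal{L}(X_t),\mu)\le C(\mu_0)e^{-\lambda t}$ for every $\mu_0$ with finite first moment.

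There is essentially no obstacle once Theorem~\ref{mainResult} is in hand; the only point requiring (minimal) care is the restriction $\alpha\in(1,2)$ in the Wasserstein statement, which is forced both by the constraint $\beta<\alpha$ in Theorem~\ref{mainResult} and, consistently, by integrability of $|x|$ against the heavy-tailed measure $\mu$, whose tails decay only like $|x|^{-(d+\alpha)}$ by \eqref{e:1.1}.
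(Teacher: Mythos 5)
Your proposal is correct and is essentially the paper's own argument: the paper deduces the corollary immediately from Theorem~\ref{mainResult} by citing \cite[Remark 2.3]{EberleGuillinZimmer2019} for the domination of total variation ($\beta=0$) and $L^\beta$-Wasserstein ($\beta=1$) by the weighted total variation norm. You simply spell out these two comparison inequalities and the finite-first-moment check for $\mu$ explicitly rather than citing them, which is a harmless elaboration.
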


Let us make some comments on Assumption (A) and Theorem \ref{mainResult}, as well as the fractional drifts defined by \eqref{specialDrift} when $d>2-\alpha$ and by \eqref{e:one} when $d\le 2-\alpha$.
The most important conclusion from Theorem \ref{mainResult} is that the SDE \eqref{stableSDE} with $\alpha$-stable noise is exponentially ergodic for a large class of potentials, for which the corresponding SDE \eqref{LangevinSDE} with Brownian noise is not.

\begin{remark}
	Theorem \ref{mainResult} is concerned with
 rotationally symmetric measures $\mu$ (since $V$ is a radial function on $\R^d$).
 Condition \eqref{e:1.1} is a relatively weak condition that we need in order to prove the exponential ergodicity of the process $X$
  (indeed, it seems to be optimal as indicated by the exponential ergodicity for Ornstein–Uhlenbeck processes driven by symmetric $\alpha$-stable processes, cf. \cite{Masuda, WW}).
  It is satisfied, for example, by all potentials $V(x) = (1+|x|^2)^{\beta}$ for any $\beta>0$,
and by $V(x) = \log^{\beta}(1+|x|^2)$ for any $\beta > 1$,
  as well as by $V(x)=\beta\log(1+|x|^2)$ for any
$\beta\ge (d+\alpha)/2$.
We remark that it has been shown in \cite{RobertsTweedie1996} that for the latter two large classes of potentials,
as well as for the potentials $V(x) = (1+|x|^2)^{\beta}$ with $\beta < 1/2$,
the SDE \eqref{LangevinSDE} driven by the Brownian motion is not exponentially ergodic.  It is also easy to see that assumption (ii) for $d\le 2-\alpha$, as well as the first condition in  \eqref{e:1.2} for $d> 2-\alpha$, are
satisfied for all the potentials above.
Moreover, when $d> 2-\alpha$, we also require condition \eqref{e:logconc}, which means that the measure $\mu$ is  log-concave at infinity. The most restrictive condition is the second condition in \eqref{e:1.2}, which is essentially an assumption about sufficiently heavy tails of $\mu$ in relation to its mass in the region where $V' \le 0$, i.e., where $\mu$ is not log-concave.
In other words, if $r_0$  is not too large and if $\mu$ has heavy tails, then $\int_{r_0}^{\infty} e^{-V(r)} V'(r) r^d dr$ can be large enough so that the second condition in \eqref{e:1.2} holds. Obviously, if $\mu$ is log-concave everywhere, then the second condition in \eqref{e:1.2} is always satisfied.
\end{remark}

\begin{remark}
Let us informally discuss how the form of the fractional drifts given by \eqref{specialDrift} and  \eqref{e:one} is motivated by the requirement that the associated {\rm SDE} \eqref{stableSDE} has an invariant probability measure given by \eqref{Gibbs}. Suppose first that $d>2-\alpha$. Note that the generator of the process $X$ solving SDE \eqref{stableSDE} is
$Lf=-(-\Delta)^{\alpha/2}f+b\cdot\nabla f$.
Hence, informally, its dual operator enjoys the expression
$L^*f=-(-\Delta)^{\alpha/2}f+{\rm div}(bf)$;
see Remark \ref{e:remk--}. Roughly speaking, the density function $e^{-V(x)}$ of the
 invariant
  probability measure \eqref{Gibbs}
 is the fundamental solution to $L^*u=0$; that is,
${\rm div} (be^{-V})=-(-\Delta)^{\alpha/2}e^{-V}$. If we write $-(-\Delta)^{\alpha/2}e^{-V}= \Delta[(-\Delta)^{-(1-\alpha/2)}e^{-V}]={\rm div}\nabla [(-\Delta)^{-(1-\alpha/2)}e^{-V}]$, then a right choice for the drift can be $b(x)=e^{V(x)}\nabla(-\Delta)^{-(1-\alpha/2)}e^{-V(x)},$ which is equivalent to \eqref{specialDrift}; see the discussion in the beginning of Subsection \ref{case-1}. When $d\le 2-\alpha$, $(-\Delta)^{-(1-\alpha/2)}$ is not well defined, but we can informally write
$\nabla(-\Delta)^{-(1-\alpha/2)}=\nabla (\Delta)^{-1} [-(-\Delta)^{\alpha/2}]$ and understand $\nabla (\Delta)^{-1}$ as an integral operator. With this
in mind, we can see the intuition behind the formula for \eqref{e:one}. A fully rigorous proof that the probability measure given by \eqref{Gibbs} is invariant for \eqref{stableSDE} will be given in Proposition \ref{prop:inv}.
\end{remark}

\begin{remark}\label{remarkDiscrete} As we will see in the sequel, the drift term $b(x)$ defined by \eqref{specialDrift} when $d\ge1$ and $\alpha\in (0,1)$ or by \eqref{e:one} when $d\le 2-\alpha$, belongs to $C^1(\R^d)$; however, when $d\ge 2-\alpha$ and $\alpha\in [1,2)$, $b(x)$ defined by \eqref{specialDrift} seems to be only H\"{o}lder continuous; cf.\ Lemma \ref{L:lemma0}.
This may lead to some issues when one wants to consider discretizations of \eqref{stableSDE} in the latter case.	
When $d=1$ and $\alpha\in (1,2)$, in \cite{Simsekli2017} some numerical experiments were carried out by employing an Euler discretization of (\ref{stableSDE}) that involved approximating the drift (\ref{specialDrift}) via a series representation from \cite{Ortigueira2006}, see Section 4 and formula (7) in \cite{Simsekli2017}.
However, in order to rigorously analyse convergence of discretized \eqref{stableSDE} in this case, one cannot rely on classical results for Euler discretizations that utilize the Lipschitz property of the drift, or even results based on taming such as \cite{DareiotisKumar2016, KumarSabanis2017}, where the one-sided Lipschitz property is required. Nevertheless, there has been some recent work \cite{KS,MikuleviciusXu2018} on discretizations of L\'{e}vy-driven SDEs with bounded H\"{o}lder continuous drifts that could be applicable in our setting after an extension to the unbounded case (cf.\ Lemma \ref{L:lemma0} below for a proof of the local H\"{o}lder property of $b(x)$ given by (\ref{specialDrift})). This, however, falls beyond the scope of the present paper and will be considered in a future project.
\end{remark}

The remaining part of this paper is organised as follows.
In Section \ref{sectionProperties}, we obtain some
 explicit estimates for the fractional drift
 given by \eqref{specialDrift} when $d>2-\alpha$ and
 by \eqref{e:one} when $d\le 2-\alpha$, under Assumption (A).
 In particular, under a mild additional assumption, we get that $\langle b(x),x\rangle \asymp -\frac{e^{V(x)}}{|x|^{d+\alpha}}|x|^2$ for $|x|$ large enough. We also  claim that the fractional drift term is locally $(2-\alpha)$-H\"{o}lder continuous when
  $\alpha\in (1,2)$,
   locally $(1-\varepsilon)$-H\"{o}lder continuous for any $\varepsilon>0$ when $\alpha= 1$, and belongs to $C^1(\R^d)$ when $\alpha\in (0,1)$.
 Section \ref{sectionFurther}
 is devoted to properties of the SDE  \eqref{stableSDE} with the fractional drift terms.
We prove that the SDE  \eqref{stableSDE} with these drifts has a unique strong solution, and  show that $\mu$ given by (\ref{Gibbs}) is the unique invariant measure for
\eqref{stableSDE}.
Finally, we conclude by proving Theorem \ref{mainResult}.

\section{
Properties of the fractional drift}\label{sectionProperties}
\subsection{The case of $d> 2-\alpha$}\label{case-1}
In this subsection, we always assume that $d\ge1$ and
 $\alpha\in (0,2)$ with $d>2-\alpha$. Let $V\in C^1(\R^d)$ such that $e^{-V}|\nabla V|\in L^1(\R^d;dx)
 \cap C_b(\R^d)$. We first note that for the drift term $b(x)$ defined by \eqref{specialDrift}, it holds that
\begin{equation}\label{exa-b}b(x)=e^{V(x)}\nabla((-\Delta)^{-(1-\alpha/2)}e^{-V})(x),\end{equation} where $(-\Delta)^{-(1-\alpha/2)}$ is the Green operator corresponding to the symmetric (rotationally invariant) $(2-\alpha)$-stable process on $\R^d$, cf.\ \cite{BBKRSV, Kwa} and the references therein.
Since $d>2-\alpha$,  the symmetric $(2-\alpha)$-stable process is transient on $\R^d$, and so $(-\Delta)^{-(1-\alpha/2)}$ is well defined;
moreover,
$$(-\Delta)^{-(1-\alpha/2)} f(x)=C_{d,2-\alpha}\int_{\R^d} \frac{f(y)}{|x-y|^{d-(2-\alpha)}}\,dy,\quad f\in L^1(\R^d;dx),$$
 see \cite[Definition 2.11]{Kwa}.
Indeed, because $V\in C^1(\R^d)$ and $e^{-V}|\nabla V|\in L^1(\R^d;dx)
\cap C_b(\R^d)$, by the dominated convergence theorem, for any $x\in \R^d$,
\begin{equation}\label{ecg}
\aligned \quad& \nabla((-\Delta)^{-(1-\alpha/2)}e^{-V})(x)\\
&=C_{d,2-\alpha}\nabla\bigg[\int_{\R^d} \frac{e^{-V(y)}}{|\cdot-y|^{d-(2-\alpha)}}\,dy\bigg] (x)
=C_{d,2-\alpha}\nabla \bigg[\int_{\R^d}\frac{e^{-V(\cdot-z)}}{|z|^{d-(2-\alpha)}}\,dz\bigg](x) \\
&=- C_{d,2-\alpha}\int_{\R^d}\frac{e^{-V(x-z)}\nabla V(x-z)}{|z|^{d-(2-\alpha)}}\,dz=-C_{d,2-\alpha}\int_{\R^d}\frac{e^{-V(y)}\nabla V(y)}{|x-y|^{d-(2-\alpha)}}\,dy.
\endaligned
\end{equation}

\begin{remark}When $\alpha=2$, by \eqref{exa-b} the drift term
$b(x)$ becomes $- \nabla V(x)$. Moreover, $Z_t$ becomes $\sqrt{2}B_t$, and hence the SDE \eqref{stableSDE} is
reduced to \eqref{LangevinSDE}. \end{remark}

Recall that for any $\theta\ge0$, the H\"{o}lder-Zygmund space $\mathcal{C}_b^\theta(\R^d)$ is defined by
$$ \mathcal{C}_b^\theta(\R^d)=\left\{f\in C_b(\R^d): \|f\|_{\mathcal{C}_b^\theta(\R^d)}:=\|f\|_\infty +\sup_{x\in \R^d, h\neq 0}\frac{ \Delta_h^{[\theta]+1} f(x)}{|h|^\theta}<\infty\right\},$$ where $$\Delta_hf(x)=f(x+h)-f(x),\quad \Delta_h^jf(x)=\Delta_h(\Delta_h^{j-1}f)(x),\,\, j\ge 2.$$
Note that when $\theta\in (0,\infty)\backslash \Z_+$, $\mathcal{C}_b^\theta(\R^d)$ coincides with the classical H\"{o}lder space $C_b^\theta(\R^d)$ equipped with the norm
$$\|f\|_{C_b^\theta(\R^d)}:=\|f\|_\infty+\sum_{j=1}^{[\theta]}\sum_{\beta\in \Z_0^d \hbox{ and } |\beta|=j}\|\partial^\beta f\|_\infty+\max_{\beta\in \Z_0^d \hbox{ and }|\beta|=[\theta]}\sup_{x\neq y}\frac{|\partial^\beta f(x)-\partial^\beta f(y)|}{|x-y|^{\theta-[\theta]}},$$ where
 $\Z_+=\{1,2,\cdots\}$, $\Z_0=\Z_+\cup\{0\}$,
 $|\beta|=|\beta_1|+\cdots+|\beta_d|$ for $\beta=(\beta_1,\beta_2,\cdots,\beta_d)$; see \cite[Theorem 1 in Section 2.7.2, p.\ 201]{Tri}.
 However, when $\theta \in \Z_+$, the H\"{o}lder-Zygmund space $\mathcal{C}_b^\theta(\R^d)$ is strictly larger than $C_b^\theta(\R^d)$. In particular, when $\theta=1$, $\mathcal{C}_b^1(\R^d)$ is strictly larger than the space of bounded Lipschitz
continuous functions (see \cite[Example in Section 4.3.1, p.\ 148]{Stein}), which is, in turn, strictly larger than $C_b^1(\R^d)$. Note also that
$\mathcal{C}_b^1(\R^d)\subset C_{b}^{1-\varepsilon}(\R^d)$ for any $\varepsilon>0$.

We have the following statement.

\begin{lemma}\label{L:lemma0} Assume that
$V\in C^1(\R^d)$
 such that $e^{-V}|\nabla V|\in L^1(\R^d;dx)\cap C_b(\R^d)$. Then, the drift term $b(x)$ defined by \eqref{specialDrift}
 is locally $(2-\alpha)$-H\"{o}lder continuous when
 $\alpha\in (1,2)$, is locally $(1-\varepsilon)$-H\"{o}lder continuous for any $\varepsilon>0$ when $\alpha= 1$, and is in $C^1(\R^d)$ when $\alpha\in (0,1)$.
 \end{lemma}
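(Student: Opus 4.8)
The plan is to write the drift as $b(x) = e^{V(x)} g(x)$ with $g(x) := \nabla\bigl((-\Delta)^{-(1-\alpha/2)} e^{-V}\bigr)(x)$, which by \eqref{exa-b} and \eqref{ecg} equals $-C_{d,2-\alpha}\int_{\R^d} e^{-V(y)}\nabla V(y)|x-y|^{-(d-(2-\alpha))}\,dy$. Since $e^{V}\in C^1(\R^d)$ and is locally bounded (because $V\in C^1$), the local H\"older/$C^1$ regularity of $b$ on any ball reduces to that of $g$: a product of a locally $C^1$ function with a locally $\theta$-H\"older function is locally $\theta$-H\"older, and a product of two $C^1$ functions is $C^1$. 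So it suffices to analyse $g$.

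The key observation is that $g = \nabla (-\Delta)^{-(1-\alpha/2)} f$ with $f := e^{-V}\in C_b(\R^d)$; equivalently $g$ is the Riesz potential of order $(2-\alpha)$ applied to $f$, differentiated once. I would invoke the standard mapping properties of the Riesz potential (equivalently, the fractional integration operator $(-\Delta)^{-(1-\alpha/2)}$) on H\"older–Zygmund spaces: for $f\in C_b(\R^d)$ one has $(-\Delta)^{-(1-\alpha/2)} f \in \mathcal{C}_b^{2-\alpha}(\R^d)$ locally, hence $\nabla (-\Delta)^{-(1-\alpha/2)} f \in \mathcal{C}_b^{1-\alpha}(\R^d)$ locally when $\alpha\in(0,1)$ and $\in \mathcal{C}_b^{2-\alpha-1}$-type statements are degenerate when $\alpha\ge1$. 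More precisely, I would argue case by case. When $\alpha\in(0,1)$: $f\in C_b$ has regularity index $0$, the operator $(-\Delta)^{-(1-\alpha/2)}$ raises it by $2-\alpha>1$, so $(-\Delta)^{-(1-\alpha/2)}f$ is locally $C^{2-\alpha}$, and one derivative lands us at local $C^{1-\alpha}$; but this reasoning only uses $f\in C_b$, and to get the full $C^1$ claim one should instead differentiate under the integral using $e^{-V}|\nabla V|\in L^1\cap C_b$ — as already done in \eqref{ecg}, $g(x) = -C_{d,2-\alpha}\int e^{-V(y)}\nabla V(y)|x-y|^{-(d-2+\alpha)}\,dy$, and since $d-2+\alpha < d-1$ when $\alpha<1$, the kernel $|z|^{-(d-2+\alpha)}$ has a gradient $|z|^{-(d-1+\alpha)}$ that is still locally integrable, so one may differentiate once more under the integral sign and obtain $g\in C^1$. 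When $\alpha=1$: the kernel exponent is $d-1$, borderline; splitting the integral into a neighbourhood of $x$ and its complement and estimating the modulus of continuity of $|x-y|^{-(d-1)}$ (resp.\ its gradient $|x-y|^{-d}$, which is only $L^1_{\mathrm{loc}}$ up to a log) gives local $(1-\varepsilon)$-H\"older continuity for every $\varepsilon>0$. When $\alpha\in(1,2)$: the kernel $|x-y|^{-(d-2+\alpha)}$ with $d-2+\alpha\in(d-1,d)$ is still locally integrable, and the classical estimate $\big||x_1-y|^{-s}-|x_2-y|^{-s}\big|\lesssim |x_1-x_2|^{2-\alpha}\,(\cdot)$ after integration (using $s = d-2+\alpha$, $s-(2-\alpha)=d$) yields local $(2-\alpha)$-H\"older continuity of $g$.

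Concretely, for the H\"older cases I would fix a ball $B(0,R)$, take $x_1,x_2\in B(0,R)$, set $r=|x_1-x_2|$, and split $\R^d = B(x_1,2r)\cup B(x_1,2r)^c$. On the inner region, bound each of the two integrals $\int_{B(x_i,3r)} |e^{-V}\nabla V|(y)\,|x_i-y|^{-(d-2+\alpha)}\,dy$ crudely using boundedness of $e^{-V}|\nabla V|$ on $B(0,R+1)$ and $\int_{B(0,3r)}|z|^{-(d-2+\alpha)}\,dz \lesssim r^{2-\alpha}$. On the outer region, use the mean value theorem on $t\mapsto |x_1 + t(x_2-x_1) - y|^{-(d-2+\alpha)}$ to produce a factor $r\cdot|x_1-y|^{-(d-1+\alpha)}$ (valid since $|x_i-y|\ge r$ there, so the segment stays comparable), and then split the resulting integral over dyadic annuli $2^k r \le |x_1-y| < 2^{k+1} r$: the contribution of annulus $k$ is $\lesssim r\cdot (2^k r)^{-(d-1+\alpha)}\cdot (2^k r)^d = r\cdot (2^k r)^{1-\alpha}$, and summing over $k\ge1$ gives $\lesssim r^{2-\alpha}$ when $\alpha\in(1,2)$ (geometric series converges), $\lesssim r\log(1/r)\lesssim r^{1-\varepsilon}$ when $\alpha=1$. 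Combining the two regions gives the claimed local H\"older modulus, uniformly on $B(0,R)$; multiplying by the locally Lipschitz factor $e^{V}$ finishes the proof.

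The main obstacle is the borderline case $\alpha=1$ and, more generally, making the dyadic-annuli splitting argument clean enough that the constants are genuinely uniform on compact sets while the far-field tail integral converges — here one has to be a little careful because $e^{-V}|\nabla V|$ is only assumed $L^1\cap C_b$ (not compactly supported), so the far-field part of the integral must be controlled by the $L^1$ norm rather than by a crude sup bound. In fact, for $|x_1 - y|$ large the factor $|x_1-y|^{-(d-1+\alpha)}$ is bounded, so the tail is $\lesssim r\|e^{-V}\nabla V\|_{L^1}$, which is even better ($O(r)$); the delicate estimates are only needed for the intermediate annuli, and there the $C_b$ bound on $e^{-V}|\nabla V|$ plus local integrability of the kernel suffices. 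So the "hard part" is really just organizing the three regimes (inner ball, intermediate annuli, far tail) so that each is estimated by the appropriate norm, and checking that in the $C^1$ case $\alpha\in(0,1)$ the differentiated kernel $|z|^{-(d-1+\alpha)}$ is still locally integrable (it is, since $d-1+\alpha<d$) so that dominated convergence legitimizes differentiation under the integral sign.
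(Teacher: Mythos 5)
Your proposal is correct, but it takes a genuinely different route from the paper. The paper rewrites $b(x)=e^{V(x)}\cdot\bigl[(-\Delta)^{-(1-\alpha/2)}(e^{-V}\nabla V)\bigr](x)$ via \eqref{ecg} and reduces everything to one abstract mapping property: $(-\Delta)^{-(1-\alpha/2)}$ sends $L^1(\R^d;dx)\cap B_b(\R^d)$ into the H\"older--Zygmund space $\mathcal{C}_b^{2-\alpha}(\R^d)$. That mapping property is proved by representing the Riesz potential as $\int_0^\infty P_t f\,dt$, where $(P_t)_{t\ge0}$ is the semigroup of the symmetric $(2-\alpha)$-stable process, and splitting the time integral at $t=|h|^{2-\alpha}$; the first-difference estimate for $\alpha\in(1,2)$ uses $\|\nabla P_t\|_{B_b\to B_b}\lesssim t^{-1/(2-\alpha)}$, while the second-difference estimate for $\alpha\in(0,1]$ uses $\|\nabla^2 P_t\|_{B_b\to B_b}\lesssim t^{-2/(2-\alpha)}$ (both taken from \cite{SSW,KS}). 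You instead estimate the convolution kernel $|x-y|^{-(d-2+\alpha)}$ by hand, splitting $\R^d$ into an inner ball, intermediate dyadic annuli, and a far tail, using the $C_b$ bound on $e^{-V}|\nabla V|$ for the first two regimes and the $L^1$ bound for the tail. Your dyadic computation correctly gives $\lesssim r^{2-\alpha}$ for $\alpha\in(1,2)$ and $\lesssim r\log(1/r)$ for $\alpha=1$, and for $\alpha<1$ you correctly observe that the once-differentiated kernel $|z|^{-(d-1+\alpha)}$ is locally integrable precisely when $\alpha<1$, allowing differentiation under the integral. Both arguments are sound. The paper's is shorter because it offloads the analysis onto existing gradient estimates for stable densities, and it naturally produces the sharper Zygmund conclusion $\mathcal{C}_b^1$ at $\alpha=1$; yours is self-contained and avoids all semigroup machinery, at the cost of more explicit bookkeeping. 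One small addition you should make for completeness: in the $\alpha<1$ case, after differentiating under the integral sign you should also verify that the resulting integral is continuous in $x$ (by the same near/far splitting), so that $g$, and hence $b=e^Vg$, lies in $C^1(\R^d)$ rather than merely being pointwise differentiable.
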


\begin{proof} Suppose first that $\alpha \in (1,2)$. By $V\in C^1(\R^d)$ and $e^{-V}|\nabla V|\in L^1(\R^d;dx)
\cap C_b(\R^d)
$, it is easy to see that $b(x)$ defined by \eqref{specialDrift} is locally bounded.
 Since
 $V\in
C^1(\R^d)$, from \eqref{ecg}, to prove the desired assertion it suffices to verify that $(-\Delta)^{-(1-\alpha/2)}f\in
 \mathcal{C}_b^{2-\alpha}(\R^d)$ for all $f\in L^1(\R^d;dx)\cap B_b(\R^d)$.
Indeed, let $p(t,x,y)=p(t,x-y)$ and $(P_t)_{t\ge0}$ be the transition density function and the semigroup of the $(2-\alpha)$-symmetric stable process, respectively.
It is known that there is a constant $c_1>0$ such that
$$\|\nabla P_t f\|_\infty\le c_1t^{-1/(2-\alpha)}\|f\|_\infty,\quad t>0, f\in B_b(\R^d),$$ which is equivalent to saying that there is a constant $c_2>0$ such that for all $t>0$,
\begin{equation}\label{e:grandient}\int_{\R^d} |\nabla p(t,\cdot)(x)|\,dx\le c_2 t^{-1/(2-\alpha)};\end{equation} see \cite[Example 1.5 and Theorem 3.2]{SSW} or \cite[Lemma 4.1 and the proof of Corollary 2.5]{KS}.
Recall that, for any $f\in L^1(\R^d;dx)\cap B_b(\R^d)$,
\begin{align*}(-\Delta)^{-(1-\alpha/2)}f(x)&=C_{d,2-\alpha}\int_{\R^d}\frac{f(y)}{|x-y|^{d-(2-\alpha)}}\,dy\\
&=\int_{\R^d} f(y) \int_0^\infty p(t,x-y)\,dt\,dy=\int_0^\infty \int_{\R^d} f(y)p(t,x-y)\,dy\,dt.\end{align*}
Thus,
when $\alpha\in (1,2)$,
 for any $f\in L^1(\R^d;dx)\cap B_b(\R^d)$ and $x,h\in \R^d$,
\begin{align*}|(-\Delta)^{-(1-\alpha/2)}f (x)-&(-\Delta)^{-(1-\alpha/2)}f (x+h)|\\
&\le\|f\|_\infty\int_0^\infty\int_{\R^d} |p(t,x-y)-p(t,x+h-y)|\,dy\,dt\\
&\le \|f\|_\infty \int_0^{|h|^{2-\alpha}} \int_{\R^d} (p(t,x-y)+p(t,x+h-y))\,dy\,dt\\
&\quad + \|f\|_\infty |h|\int_{|h|^{2-\alpha}}^\infty \int_0^1\int_{\R^d} |\nabla p(t,x+\eta h-y)|\,dy\,d\eta\,dt\\
&\le 2 \|f\|_\infty|h|^{2-\alpha}+c_2\|f\|_\infty|h|\int_{|h|^{(2-\alpha)}}^\infty  t^{-1/(2-\alpha)}\,dt\\
&\le c_3\|f\|_\infty |h|^{2-\alpha},   \end{align*}
where in the last inequality we used the fact that $2-\alpha\in (0,1)$ due to $\alpha\in (1,2)$. In particular, for any $f\in L^1(\R^d;dx)\cap B_b(\R^d)$, $(-\Delta)^{-(1-\alpha/2)}f\in
\mathcal{C}^{2-\alpha}_b(\R^d)=
C^{2-\alpha}_b(\R^d)$.

Next, we consider the case of $\alpha\in(0,1]$.
According to \eqref{e:grandient} and \cite[Lemma 4.1(3)]{KS} as well as the iterating procedure, there is a constant $c_4>0$ such that for all $t>0$,
$$\int_{\R^d}
|\nabla^2 p(t,\cdot)(x)|\,dx\le c_4 t^{-2/(2-\alpha)}. $$
Then, for any $f\in L^1(\R^d;dx)\cap B_b(\R^d)$ and $x,h\in \R^d$,
\begin{align*}&|\Delta^2_h(-\Delta)^{-(1-\alpha/2)}f(x)|\\
&=|(-\Delta)^{-(1-\alpha/2)}f (x+2h)-2(-\Delta)^{-(1-\alpha/2)}f (x+h)+(-\Delta)^{-(1-\alpha/2)}f (x)|\\
&\le\|f\|_\infty\int_0^\infty\int_{\R^d} |p(t,x+2h-y)-2p(t,x+h-y)+p(t,x-y)|\,dy\,dt\\
&\le \|f\|_\infty \int_0^{|h|^{2-\alpha}} \int_{\R^d} (p(t,x+2h-y)+2p(t,x+h-y) +p(t,x-y))\,dy\,dt\\
&\quad + c_5\|f\|_\infty |h|^2\int_{|h|^{2-\alpha}}^\infty \int_0^1(1-\eta) \int_{\R^d} |\nabla^2 p(t,x+\eta h-y)|\,dy\,d\eta\,dt\\
&\le 4 \|f\|_\infty|h|^{2-\alpha}+  c_6\|f\|_\infty|h|^2\int_{|h|^{2-\alpha}}^\infty  t^{-2/(2-\alpha)}\,dt\\
&\le c_7\|f\|_\infty |h|^{2-\alpha},   \end{align*} where in the
second inequality we used the Taylor formula.
Hence,
$(-\Delta)^{-(1-\alpha/2)}f\in
 \mathcal{C}_b^{2-\alpha}(\R^d)$, thanks to the fact that $(-\Delta)^{-(1-\alpha/2)}f$ is bounded for any $f\in L^1(\R^d;dx)\cap B_b(\R^d)$.
The proof is completed. \end{proof}

\begin{remark}\label{remark:notC1}
From expression \eqref{specialDrift}, one may expect  that the drift term $b(x)$ does not belong to $C^1(\R^d)$
when $\alpha\in (1,2)$. Informally,
since the integral
$$\int_{\R^d}\frac{|f(y)|}{|x-y|^{d-(2-\alpha)+1}}\,dy$$
 may diverge for $f\in L^1(\R^d;dx)\cap B_b(\R^d)$ with $\alpha\in (1,2)$, we cannot take the derivative inside the integral in \eqref{specialDrift}.
 \end{remark}

In the rest of this part, we will further assume that $V$ is a radial function. We will present some explicit estimates for the drift term $b(x)$ defined by  \eqref{specialDrift}, i.e.,
$$b(x) = -C_{d,2-\alpha} e^{V(x)} \int_{\R^d} \frac{e^{-V(y)} \nabla V(y)}{|x-y|^{d-(2-\alpha)}}\, dy
= -C_{d,2-\alpha} e^{V(|x|)} \int_{\R^d} \frac{e^{-V(|y|)}  V'(|y|)y}{|y||x-y|^{d-(2-\alpha)}}\, dy \,.$$ In particular, it holds that
$b(x)=-b(-x)$ and $b(0)=0$, i.e., $b(x)$ is an anti-symmetric function on $\R^d$.

With a slight abuse of notation, in the following we write $V(x)=V(|x|)$ for all $x\in \R^d$.

\begin{lemma}\label{Lemma3.1} Let $V(x)=V(|x|)$ for all $x\in \R^d$ such that
$V\in
C^1(\R^d)$ and $e^{-V}|\nabla V|\in L^1(\R^d;dx)\cap C_b(\R^d)$.
  Suppose that $r_0:=\sup\{r>0: V'(r)\le 0\}<\infty,$
 \begin{equation}\label{e:ii-}\int_{0}^\infty e^{-V(r)} |V'(r)| r^d\,dr<\infty\end{equation} and
\begin{equation}\label{e:cond-33}\int_0^\infty e^{-V(r)} V'(r) r^d\,dr >0.\end{equation}
Then, there exist constants $c_1, c_2>0$
and $r_1\ge1$
 such that for all $x\in \R^d$,
\begin{equation}\label{e:assertion3-1}
\langle x, b(x)\rangle\le c_1
\I_{\{|x|\le r_1\}}
- \frac{c_2 e^{V(|x|)}}{(1+|x|)^{d+\alpha}}|x|^2
\I_{\{|x|>r_1\}}.
\end{equation}
 \end{lemma}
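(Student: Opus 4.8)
The plan is to compute $\langle x, b(x)\rangle$ explicitly using the rotational symmetry of $V$, reduce the surface integral over the sphere to a one-dimensional integral, and then separate the contributions of the region $\{r \le r_0\}$ (where $V' \le 0$ is allowed) from that of $\{r > r_0\}$ (where $V' > 0$). First I would insert the formula for $b$ to get
$$\langle x, b(x)\rangle = -C_{d,2-\alpha}\, e^{V(|x|)} \int_{\R^d} \frac{e^{-V(|y|)} V'(|y|)\, \langle x, y\rangle}{|y|\,|x-y|^{d-(2-\alpha)}}\, dy,$$
and switch to polar coordinates $y = r\omega$, $r > 0$, $\omega \in \Ss^{d-1}$. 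This gives an inner integral $\int_{\Ss^{d-1}} \frac{\langle x, \omega\rangle}{|x - r\omega|^{d-(2-\alpha)}}\, \sigma(d\omega)$, which by symmetry points in the direction of $x$ and, after writing $x = \rho\, e$ with $e = x/|x|$, becomes $|x|$ times a scalar kernel $K(\rho, r) := \int_{\Ss^{d-1}} \frac{\langle e, \omega\rangle}{(\rho^2 - 2\rho r \langle e,\omega\rangle + r^2)^{(d-(2-\alpha))/2}}\, \sigma(d\omega)$. So
$$\langle x, b(x)\rangle = -C_{d,2-\alpha}\, e^{V(|x|)} |x| \int_0^\infty e^{-V(r)} V'(r)\, r^{d-1}\, K(|x|, r)\, dr.$$

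The key sign/positivity input is that $K(\rho, r) > 0$ for all $\rho, r > 0$ — intuitively, the kernel $|x - r\omega|^{-(d-(2-\alpha))}$ is larger when $\omega$ is aligned with $e$ (i.e. $y$ is closer to $x$) than when it is anti-aligned, so the integral of $\langle e,\omega\rangle$ against it is strictly positive; this is exactly the monotonicity underlying the fact that the Riesz potential of a radial decreasing function is radial decreasing. I would also record the two-sided asymptotics of $K(\rho, r)$: for $\rho$ large and $r$ in a bounded set, $K(\rho, r) \asymp c\, r\, \rho^{-(d-(2-\alpha)+1)} = c\, r\, \rho^{-(d-\alpha+1)}$ (expand the kernel to first order in $r/\rho$ using $\langle e,\omega\rangle$ having zero mean on the sphere), and $K$ is bounded on bounded sets of $(\rho,r)$ away from the diagonal, with the near-diagonal behaviour controlled because $d - (2-\alpha) < d$ keeps the singularity integrable. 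Splitting $\int_0^\infty = \int_0^{r_0} + \int_{r_0}^\infty$, on $(r_0,\infty)$ we have $V' > 0$ so $-e^{-V(r)}V'(r)r^{d-1}K < 0$; on $(0,r_0)$ the integrand has a sign but is controlled in absolute value by $e^{-V(r)}|V'(r)|r^{d-1}\sup_{r\le r_0}K(|x|,r)$, which is finite by \eqref{e:ii-} and the local boundedness/decay of $K$. This produces the crude bound $\langle x, b(x)\rangle \le C\, e^{V(|x|)}|x|$ for bounded $|x|$ (giving the $\I_{\{|x|\le r_1\}}$ term — note $e^{V}$ is bounded on compacts since $e^{-V}\in C_b$ and is bounded below, hence $e^{V}$ is locally bounded) and, for $|x| = \rho$ large,
$$\langle x, b(x)\rangle \le -c\, e^{V(\rho)}\, \rho \cdot \rho^{-(d-\alpha+1)} \left( \int_0^\infty e^{-V(r)} V'(r)\, r^d\, dr - o(1)\right) \cdot \rho = -\frac{c'\, e^{V(\rho)}}{\rho^{d+\alpha}}\, \rho^2,$$
where the bracket is positive for $\rho$ large precisely by the assumption \eqref{e:cond-33} (and dominated convergence, using \eqref{e:ii-} to pass the $\rho\to\infty$ limit inside, together with uniform-in-$r$ control of $\rho^{d-\alpha+1}K(\rho,r)$ on bounded $r$-sets and a tail estimate for large $r$ using \eqref{e:1.1}-type decay of $e^{-V}$). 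Choosing $r_1$ large enough that this holds and absorbing $(1+|x|)^{-(d+\alpha)} \asymp |x|^{-(d+\alpha)}$ for $|x| > r_1$ yields \eqref{e:assertion3-1}.

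The main obstacle I anticipate is making the asymptotic statement $\rho^{d-\alpha+1} K(\rho, r) \to c\, r$ uniform enough in $r$ to justify interchanging the limit $\rho \to \infty$ with the integral $\int_0^\infty e^{-V(r)}V'(r)r^{d-1}(\cdots)\,dr$: one needs a uniform bound of the form $\rho^{d-\alpha+1} K(\rho,r) \le C(1+r)^{\gamma}$ valid for all $r \in (0,\infty)$ and all large $\rho$, with $\gamma$ small enough that $(1+r)^\gamma e^{-V(r)}|V'(r)|r^{d-1}$ is still integrable — this forces a careful case analysis according to whether $r \ll \rho$, $r \asymp \rho$, or $r \gg \rho$, and in the middle regime the near-singularity of the Riesz kernel must be handled (it is integrable since $d-(2-\alpha)<d$, but the bound must be tracked in $\rho$ and $r$). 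A secondary technical point is verifying the strict positivity and the correct order of vanishing of $K(\rho,r)$ as $r\to 0$; this can be done by an explicit first-order Taylor expansion of the kernel in $r$, using $\int_{\Ss^{d-1}}\langle e,\omega\rangle\,\sigma(d\omega)=0$ and $\int_{\Ss^{d-1}}\langle e,\omega\rangle^2\,\sigma(d\omega)>0$. Everything else is routine polar-coordinate bookkeeping.
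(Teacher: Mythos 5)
Your overall strategy (rotational reduction to a one-dimensional kernel $K(\rho,r)$, positivity by comparing aligned vs.\ anti-aligned $\omega$, first-order expansion in $r/\rho$, then invoking \eqref{e:cond-33} in the limit) is a legitimate alternative to what the paper does, but it is not the paper's argument, and as written it has a genuine gap that is more than bookkeeping. The paper never passes $|x|\to\infty$ inside the integral. Instead, it antisymmetrises the Riesz kernel over $\{\langle x,y\rangle\ge0\}$ to obtain the difference $|x-y|^{-(d+\alpha-2)}-|x+y|^{-(d+\alpha-2)}$, then uses the convexity/concavity of $\psi(u)=u^{-(d+\alpha-2)/2}$ (namely $\psi''\ge0$ and $\psi'''\le0$, via the mean value theorem) to get two explicit, finite-constant bounds: one of the form $\cdots|x-y|^{-(d+\alpha)}$ for the region $\{V'\le0\}$ (which lives in $\{|y|\le r_0\}$, so $|x-y|\ge(1-C^{-1})|x|$ once $|x|\ge Cr_0$), and one of the form $\cdots(|x|^2+|y|^2)^{-(d+\alpha)/2}$ for $\{V'\ge0\}$ (restricted to $|y|\le a|x|$). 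The free parameters $C$ (large) and $a$ (small) are then chosen, with the help of \eqref{e:cond-33} and \eqref{e:ii-}, to make the bracket strictly positive. No dominated convergence is needed, and the near-diagonal region $|y|\approx|x|$ is never probed.

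Your approach, by contrast, needs the dominated-convergence passage you flag as the main obstacle, and the uniform bound $\rho^{d+\alpha-1}K(\rho,r)\le C(1+r)^{\gamma}$ for \emph{all} $r>0$ that you call for is in fact false near the diagonal when $\alpha\ge 1$: writing $K(\rho,r)=\rho^{-(d+\alpha-2)}G(r/\rho)$ with $G(s)=\int_{\Ss^{d-1}}\langle e,\omega\rangle\,(1-2s\langle e,\omega\rangle+s^2)^{-(d+\alpha-2)/2}\,\sigma(d\omega)$, one has $G(s)\uparrow\infty$ as $s\uparrow1$ for $\alpha\ge1$, so $\rho^{d+\alpha-1}K(\rho,r)=\rho\,G(r/\rho)$ is unbounded near $r=\rho$, and what remains after integrating in $r$ cannot be dominated pointwise under \eqref{e:ii-} alone. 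The fix is not harder estimates in the middle regime but a different split: discard $\int_{a\rho}^{\infty}$ \emph{before} any limit (its contribution has the right sign, since $V'>0$ and $K>0$ there), and on $r\le a\rho$ use that $G(s)/s$ is continuous, hence bounded, on $[0,a]$ for any fixed $a<1$; this gives the dominating function $C_a\,r$, which is exactly what \eqref{e:ii-} integrates. Once this truncation is in place the argument closes. You also invoke \eqref{e:1.1}-type tail decay of $e^{-V}$, which is not among the hypotheses of this lemma and is not needed for it. Finally, watch the exponent arithmetic: $d-(2-\alpha)+1=d+\alpha-1$, not $d-\alpha+1$, and the displayed chain $\rho\cdot\rho^{-(d-\alpha+1)}\cdot(\cdots)\cdot\rho=-c'e^{V(\rho)}\rho^{2}/\rho^{d+\alpha}$ carries a spurious extra factor of $\rho$; the correct count is $\rho\cdot\rho^{-(d+\alpha-1)}=\rho^{2-d-\alpha}$, already the right power.
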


\begin{proof} For any $x\in \R^d$, by changing the variables, we find that
\begin{align*}C_{d,2-\alpha}^{-1}\langle x, b(x)\rangle=& -e^{V(|x|)} \int_{\R^d}\frac{e^{-V(|y|)} V'(|y|)\langle y,x\rangle}{|y||x-y|^{d-(2-\alpha)}}\,dy\\
=&-e^{V(|x|)} \int_{\{\langle x,y\rangle\ge0\}}\frac{e^{-V(|y|)} V'(|y|)\langle y,x\rangle}{|y|}\left(\frac{1}{|x-y|^{d-(2-\alpha)}}-\frac{1}{|x+y|^{d-(2-\alpha)}}\right)\,dy\\
=&-e^{V(|x|)} \int_{\{ V'(|y|)\le 0, \langle x,y\rangle\ge0\}}\frac{e^{-V(|y|)} V'(|y|)\langle y,x\rangle}{|y|}\left(\frac{1}{|x-y|^{d-(2-\alpha)}}-\frac{1}{|x+y|^{d-(2-\alpha)}}\right)\,dy\\
&-e^{V(|x|)} \int_{\{ V'(|y|)\ge 0, \langle x,y\rangle\ge0\}}\frac{e^{-V(|y|)} V'(|y|)\langle y,x\rangle}{|y|}\left(\frac{1}{|x-y|^{d-(2-\alpha)}}-\frac{1}{|x+y|^{d-(2-\alpha)}}\right)\,dy\\
=&:J_1+J_2.\end{align*}

Note that, for any $x,y\in \R^d$, we have
\begin{align*}&\frac{1}{|x-y|^{d-(2-\alpha)}}- \frac{1}{|x+y|^{d-(2-\alpha)}}
 =\left(|x|^2+|y|^2-2\langle x,y\rangle\right)^{-(d+\alpha-2)/2}- \left(|x|^2+|y|^2+2\langle x,y\rangle\right)^{-(d+\alpha-2)/2},
\end{align*}
and that for the function $\psi(r) := r^{-{(d+\alpha - 2)}/{2}}$, we have $$\psi(r-\delta) - \psi(r+\delta) \le -2\delta \psi'(r-\delta),\quad 0\le \delta \le r,$$ thanks to
$\psi'' \ge 0$ and the mean value theorem. Hence, taking $r = |x|^2 + |y|^2$ and $\delta = 2 \langle x , y \rangle \ge 0$, we get
\begin{align*}J_1 \le &- 2(d+\alpha-2) e^{V(|x|)} \int_{\{ V'(|y|) \le 0 , \langle x , y \rangle \ge 0 \}} \frac{e^{-V(|y|)} V'(|y|) \langle y , x \rangle^2 }{|y| } \frac{1}{|x-y|^{d+\alpha}} \, dy\\
\le&- 2(d+\alpha-2) e^{V(|x|)} \int_{\{ V'(|y|) \le 0 , \langle x , y \rangle \ge 0 \}} \frac{e^{-V(|y|)} V'(|y|) \langle y , x \rangle^2 }{|y| } \frac{1}{||x|-|y||^{d+\alpha}} \, dy\\
= &- (d+\alpha-2) e^{V(|x|)}   \int_{\{ V'(|y|) \le 0  \}} \frac{e^{-V(|y|)} V'(|y|) \langle y , x \rangle^2 }{|y| } \frac{1}{||x|-|y||^{d+\alpha}} \, dy\\
=& - (d+\alpha-2) e^{V(|x|)}   |x|^2\int_{\{ V'(|y|) \le 0  \}} \frac{e^{-V(|y|)} V'(|y|) y_1^2 }{|y| } \frac{1}{||x|-|y||^{d+\alpha}} \, dy\\
=&-\frac{d+\alpha-2}{d} e^{V(|x|)}|x|^2 \int_{\{V'(|y|)\le0\}}\frac{e^{-V(|y|)} V'(|y|)|y|}{||x|-|y||^{d+\alpha}} \, dy.
\end{align*}
Since $r_0:=\sup\{r>0: V'(r)\le 0\}<\infty$, for any $C>1$ and any $x\in \R^d$ with $|x|\ge Cr_0$,
we have
$$J_1\le -\frac{d+\alpha-2}{d}(1-C^{-1})^{-d-\alpha} \frac{e^{V(|x|)} |x|^2}{|x|^{d+\alpha}}\int_{\{ V'(|y|) \le 0  \}}  {e^{-V(|y|)} V'(|y|) |y|}\,dy.$$

On the other hand, for any $x,y\in \R^d$ with $\langle x,y\rangle\ge0$,
\begin{equation}\label{e:jj}
\frac{1}{|x-y|^{d-(2-\alpha)}}- \frac{1}{|x+y|^{d-(2-\alpha)}} \ge 2(d+\alpha-2)(|x|^2+|y|^2)^{-(d+\alpha)/2} \langle x,y\rangle.
\end{equation}
Here we used the fact that for the function $\psi(r)=r^{-(d+\alpha-2)/2}$, it holds that
$$\psi(r-\delta)-\psi(r+\delta)\ge -2 \psi'(r)\delta,\quad 0\le \delta\le r,$$ thanks to the mean value theorem again and the fact that $\psi'''\le 0$.
Combining \eqref{e:jj} with the fact
 $V'(r)\ge0$ for all $r\ge r_0$, we get that for any  $a>0$  and any $x\in \R^d$,
\begin{align*}J_2&\le -2(d+\alpha-2) e^{V(|x|)}\int_{\{V'(|y|) \ge0, \langle y,x\rangle\ge0\}}\frac{e^{-V(|y|)}V'(|y|)\langle x,y\rangle^2}{|y|(|x|^2+|y|^2)^{(d+\alpha)/2}}\,dy\\
&= -(d+\alpha-2) e^{V(|x|)}\int_{\{V'(|y|) \ge0\}}\frac{e^{-V(|y|)}V'(|y|)\langle x,y\rangle^2}{|y|(|x|^2+|y|^2)^{(d+\alpha)/2}}\,dy\\
&= -(d+\alpha-2) e^{V(|x|)}\int_{\{V'(|y|) \ge0\}}\frac{e^{-V(|y|)}V'(|y|)|x|^2 y_1^2}{|y|(|x|^2+|y|^2)^{(d+\alpha)/2}}\,dy\\
&=-\frac{d+\alpha-2}{d} e^{V(|x|)}|x|^2\int_{\{V'(|y|) \ge0\}}\frac{e^{-V(|y|)}V'(|y|)|y|}{(|x|^2+|y|^2)^{(d+\alpha)/2}}\,dy\\
&\le -\frac{d+\alpha-2}{d} e^{V(|x|)}|x|^2\int_{\{V'(|y|) \ge0, |y|\le a|x|\}}\frac{e^{-V(|y|)}V'(|y|)|y|}{(|x|^2+|y|^2)^{(d+\alpha)/2}}\,dy\\
&\le -\frac{d+\alpha-2}{d}(1+a^2)^{-(d+\alpha)/2} \frac{e^{V(|x|)}|x|^2}{|x|^{d+\alpha}}\int_{\{V'(|y|) \ge0, |y|\le a|x|\}}e^{-V(|y|)}V'(|y|)|y|\,dy. \end{align*}

According to both estimates above for $J_1$ and $J_2$, we find that for any $x\in \R^d$ with $|x|\ge Cr_0$,
\begin{align*} C_{d,2-\alpha}^{-1}\langle x, b(x)\rangle
&\le -(1-C^{-1})^{-d-\alpha} \Big(\frac{d+\alpha-2}{d}\Big) \frac{e^{V(|x|)}|x|^2}{|x|^{d+\alpha}} \\
&\quad\times \bigg[\frac{(1+a^2)^{-(d+\alpha)/2}}{(1-C^{-1})^{-d-\alpha}}\int_{\{V'(|y|) \ge0, |y|\le a|x|\}}e^{-V(|y|)}V'(|y|)|y|\,dy\\
&\quad\qquad+\int_{\{ V'(|y|) \le 0  \}}  {e^{-V(|y|)} V'(|y|) |y|}\,dy\bigg].\end{align*}

Note that, under \eqref{e:cond-33}, $$\int_{\R^d}e^{-V(|y|)}V'(|y|)|y|\,dy>0.$$ Then,
by \eqref{e:ii-},
there is a constant $R_0>r_0$ such that
$$\int_{\{|y|\le R_0\}}e^{-V(|y|)}V'(|y|)|y|\,dy>0.$$ This implies that
\begin{equation}\label{e:key1}\int_{\{V'(|y|) \ge0, |y|\le R_0\}}e^{-V(|y|)}V'(|y|)|y|\,dy+ \int_{\{ V'(|y|) \le 0  \}}  {e^{-V(|y|)} V'(|y|) |y|}\,dy>0,\end{equation} where we used the facts that $r_0:=\sup\{r>0: V'(r)\le 0\}<\infty$
and $r_0 < R_0$.
Furthermore, by  \eqref{e:key1}, we can choose $\varepsilon\in (0,1)$ small enough so that
$$M:=(1-\varepsilon)\int_{\{V'(|y|) \ge0, |y|\le R_0\}}e^{-V(|y|)}V'(|y|)|y|\,dy + \int_{\{ V'(|y|) \le 0  \}}  {e^{-V(|y|)} V'(|y|) |y|}\,dy>0.$$
Now for these fixed $R_0$ and $\varepsilon$, we find $C>1$ large enough and $a>0$ small enough such that
$$\frac{(1+a^2)^{-(d+\alpha)/2}}{(1-C^{-1})^{-d-\alpha}}\ge 1-\varepsilon,\quad aCr_0\ge R_0.$$ Then, for any $x\in \R^d$ with $|x|\ge Cr_0$,
\begin{align*}&\frac{(1+a^2)^{-(d+\alpha)/2}}{(1-C^{-1})^{-d-\alpha}}\int_{\{V'(|y|) \ge0, |y|\le a|x|\}}e^{-V(|y|)}V'(|y|)|y|\,dy +\int_{\{ V'(|y|) \le 0  \}}  {e^{-V(|y|)} V'(|y|) |y|}\,dy\\
&\ge (1-\varepsilon)\int_{\{V'(|y|) \ge0, |y|\le aCr_0\}}e^{-V(|y|)}V'(|y|)|y|\,dy +\int_{\{ V'(|y|) \le 0  \}}  {e^{-V(|y|)} V'(|y|) |y|}\,dy\\
&\ge (1-\varepsilon)\int_{\{V'(|y|) \ge0, |y|\le R_0\}}e^{-V(|y|)}V'(|y|)|y|\,dy +\int_{\{ V'(|y|) \le 0  \}}  {e^{-V(|y|)} V'(|y|) |y|}\,dy=M>0 \end{align*}
and so \begin{equation}\label{e:conlu-1}\langle x, b(x)\rangle\le -\frac{C_{d,2-\alpha}M(1-C^{-1})^{-d-\alpha}({d+\alpha-2})}{d}\frac{e^{V(|x|)}|x|^2}{|x|^{d+\alpha}}.\end{equation}

Furthermore, by $V\in C^1(\R^d)$ and $e^{-V}|\nabla V|\in L^1(\R^d;dx)\cap C_b(\R^d)$, $b(x)$ is locally bounded; see Lemma \ref{L:lemma0}. Then, for any $x\in \R^d$ with $|x|\le l$, one can find a constant $C(l)>0$ such that $|b(x)|\le C(l)$, and so
\begin{equation}\label{e:conlu-2}\langle x,b(x)\rangle\le |x| |b(x)|\le l C(l).\end{equation}

Therefore, by \eqref{e:conlu-1} and \eqref{e:conlu-2},
we can choose the constants $c_1, c_2>0$ and $r_1>1$ so that \eqref{e:assertion3-1} holds.
\end{proof}

The following statement indicates that the estimate \eqref{e:assertion3-1} for $|x|$ large enough is indeed optimal, under a mild additional assumption.

\begin{lemma}\label{Lemma3.2}
Let $V(x)=V(|x|)$ for all $x\in \R^d$ such that $V\in
C^1(\R^d)$, $e^{-V}|\nabla V|\in L^1(\R^d;dx)\cap C_b(\R^d)$, and \eqref{e:ii-} is satisfied. If \begin{equation}\label{e:i-}\limsup_{r\to\infty} [e^{-V(r)}V'(r)r^{d+1}]<\infty,\end{equation}
  then there exists a constant $c>0$ such that for all $x\in\R^d$,
$$
|b(x)|\le \frac{ce^{V(|x|)}}{(1+|x|)^{d+\alpha-1}}.
$$
\end{lemma}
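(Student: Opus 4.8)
The plan is to dispose of bounded $x$ first, and then, for large $|x|$, exploit the anti-symmetry of $b$ exactly as in the first display of the proof of Lemma~\ref{Lemma3.1}. If $|x|\le R_0$ for a fixed radius $R_0$ (to be chosen below), then $b$ is locally bounded by Lemma~\ref{L:lemma0}, while $e^{V(|x|)}(1+|x|)^{-(d+\alpha-1)}$ is bounded below by a positive constant on $\{|x|\le R_0\}$ by continuity of $V$; hence the asserted inequality holds on this set for a suitable constant, and it remains to treat $|x|\ge R_0$ with $R_0$ chosen large.

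For such $x$, writing $\R^d=\{\langle x,y\rangle\ge0\}\cup\{\langle x,y\rangle<0\}$ and substituting $y\mapsto-y$ on the second piece (as in the proof of Lemma~\ref{Lemma3.1}) gives
\[
b(x)=-C_{d,2-\alpha}\,e^{V(|x|)}\int_{\{\langle x,y\rangle\ge0\}}\frac{e^{-V(|y|)}V'(|y|)}{|y|}\left(\frac{1}{|x-y|^{d-(2-\alpha)}}-\frac{1}{|x+y|^{d-(2-\alpha)}}\right)y\,dy.
\]
Since the kernel difference is nonnegative on $\{\langle x,y\rangle\ge0\}$ and the vector $\tfrac{e^{-V(|y|)}V'(|y|)}{|y|}y$ has length $e^{-V(|y|)}|V'(|y|)|$, taking norms yields
\[
|b(x)|\le C_{d,2-\alpha}\,e^{V(|x|)}\int_{\{\langle x,y\rangle\ge0\}}e^{-V(|y|)}|V'(|y|)|\left(\frac{1}{|x-y|^{d-(2-\alpha)}}-\frac{1}{|x+y|^{d-(2-\alpha)}}\right)dy.
\]
It is worth stressing that estimating $|b(x)|$ directly from \eqref{specialDrift} by absolute values, i.e.\ by $C_{d,2-\alpha}e^{V(|x|)}\int e^{-V(|y|)}|V'(|y|)|\,|x-y|^{-(d-(2-\alpha))}\,dy$, only produces decay of order $|x|^{-(d+\alpha-2)}$, which is one power of $|x|$ too weak; the missing power comes precisely from the cancellation in the kernel difference above, and this is the conceptual heart of the argument.

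I would then split the remaining $y$-integral at $|y|=|x|/2$. On the near region $|y|\le|x|/2$ one has $|x-y|\ge|x|/2$, and the convexity of $r\mapsto r^{-(d+\alpha-2)/2}$ already used for $J_1$ in the proof of Lemma~\ref{Lemma3.1} bounds the kernel difference by $2(d+\alpha-2)\langle x,y\rangle|x-y|^{-(d+\alpha)}\le c_1|x|^{1-(d+\alpha)}|y|$; passing to polar coordinates, the near region thus contributes at most $c_2\,e^{V(|x|)}|x|^{1-(d+\alpha)}\int_0^\infty e^{-V(r)}|V'(r)|r^d\,dr$, which is finite by \eqref{e:ii-}. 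On the far region $|y|>|x|/2$ I would drop the second kernel term, so that its contribution is at most $C_{d,2-\alpha}e^{V(|x|)}\int_{\{|y|>|x|/2\}}e^{-V(|y|)}|V'(|y|)|\,|x-y|^{-(d+\alpha-2)}\,dy$, and then invoke \eqref{e:i-}: since $V'\ge0$ for large arguments (recall $r_0<\infty$ in the setting of Lemma~\ref{Lemma3.1}), \eqref{e:i-} gives $e^{-V(|y|)}|V'(|y|)|\le M|y|^{-(d+1)}$ once $|x|/2$ exceeds the relevant threshold. Rescaling by $y=|x|z$ then gives, with $v:=x/|x|$,
\[
\int_{\{|y|>|x|/2\}}\frac{dy}{|y|^{d+1}\,|x-y|^{d+\alpha-2}}=|x|^{1-(d+\alpha)}\int_{\{|z|>1/2\}}\frac{dz}{|z|^{d+1}\,|z-v|^{d+\alpha-2}}=:K\,|x|^{1-(d+\alpha)},
\]
where $K<\infty$ and, by rotational symmetry, is independent of $v$: integrability near $z=v$ uses $d+\alpha-2<d$ (i.e.\ $\alpha<2$), and integrability at infinity uses $2d+\alpha-1>d$ (i.e.\ $d+\alpha>1$), both automatic in our setting. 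Hence the far region also contributes at most $c_3\,e^{V(|x|)}|x|^{1-(d+\alpha)}$.

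Adding the two contributions yields $|b(x)|\le c_4\,e^{V(|x|)}|x|^{1-(d+\alpha)}$ for $|x|\ge R_0$, and since $|x|\ge\tfrac12(1+|x|)$ there, this is dominated by $c_5\,e^{V(|x|)}(1+|x|)^{-(d+\alpha-1)}$; combined with the bounded-$x$ case and taking $c$ to be the larger constant, this completes the proof. The main obstacle, as noted above, is that the naive absolute-value bound is one power short, so one is forced to use the reflection structure together with the convexity gain; the only genuine bookkeeping then lies in the far region, where the decay $|y|^{-(d+1)}$ supplied by \eqref{e:i-} is exactly what makes the singular integral $\int_{|y|>|x|/2}|y|^{-(d+1)}|x-y|^{-(d+\alpha-2)}\,dy$ both convergent (near $y=x$ and at infinity) and of the sharp order $|x|^{1-(d+\alpha)}$.
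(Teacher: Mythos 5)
Your proof is correct, but it takes a genuinely different route from the paper's. The paper works \emph{componentwise} on $b_i(x)$, reflects only in the $i$-th coordinate $y_i\mapsto -y_i$, and splits the resulting integral into three annular regions $\{|x-y|\le|x|/2\}$, $\{|x-y|\ge 2|x|\}$ and the intermediate annulus, where it applies a Lagrange mean-value argument to $r\mapsto(|x-y|^2+r)^{-(d+\alpha-2)/2}$. You instead keep the full vector, reflect via $y\mapsto-y$ on the half-space $\{\langle x,y\rangle\ge 0\}$, and split at $|y|=|x|/2$: on the near region you reuse the convexity bound already established for $J_1$ in Lemma \ref{Lemma3.1} together with \eqref{e:ii-}, and on the far region you use the pointwise decay $e^{-V(r)}|V'(r)|\lesssim r^{-(d+1)}$ from \eqref{e:i-} plus a rescaling $y=|x|z$ to reduce to a single $v$-independent constant $K$. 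Both arguments land on the same bound; yours is leaner (two regions, and the near region is literally the $J_1$ estimate), at the cost of the rescaling computation and the integrability check on $K$, while the paper's three-region split keeps the singular set $\{y=x\}$ isolated and never passes to a rescaled integral. One caveat, which the paper's own proof shares: turning \eqref{e:i-} into the pointwise bound $e^{-V(|y|)}|V'(|y|)|\le M|y|^{-(d+1)}$ requires a sign condition on $V'$ at infinity; you justify this by appealing to $r_0<\infty$ from Lemma \ref{Lemma3.1}, which is not among the stated hypotheses of Lemma \ref{Lemma3.2}, but the paper itself makes the identical implicit use of \eqref{e:i-} with $|V'|$ in place of $V'$, so this does not distinguish the two arguments.
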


\begin{proof}
	For convenience, we set $\tilde{b}(x)=C_{d,2-\alpha}^{-1}e^{-V(|x|)}b(x)$. Then, for any $x\in \R^d$,
	$$
		|\tilde{b}(x)|^2=\sum_{i=1}^d\Big(\int_{\R^d}\frac{e^{-V(|y|)}V'(|y|)y_i}{|y||x-y|^{d-(2-\alpha)}}\,dy\Big)^2=:\sum_{i=1}^d \text{I}_i.
	$$
	For fixed $i$, assume that $x_i\ge0$. Then,
	\begin{align*}
	\text{I}_i&=\Big(\int_{\{y_i>0\}}\frac{e^{-V(|y|)}V'(|y|)y_i}{|y|}\bigg[\frac{1}{|x-y|^{d+\alpha-2}}-\frac{1}{(|x-y|^2+4x_iy_i)^{(d+\alpha-2)/2}}\bigg]\,dy\Big)^2\\
&\le\Big(\int_{\{y_i>0\}}\frac{e^{-V(|y|)}|V'(|y|)|y_i}{|y|}\bigg[\frac{1}{|x-y|^{d+\alpha-2}}-\frac{1}{(|x-y|^2+4x_iy_i)^{(d+\alpha-2)/2}}\bigg]\,dy\Big)^2\\
	&\le 3 \Big(\int_{\{y_i>0,|x-y|\le|x|/2\}}\frac{e^{-V(|y|)}|V'(|y|)|y_i}{|y|}\bigg[\frac{1}{|x-y|^{d+\alpha-2}}+\frac{1}{(|x-y|^2+4x_iy_i)^{(d+\alpha-2)/2}}\bigg]\,dy\Big)^2\\
	&\quad+3\Big(\int_{\{y_i>0,|x-y|\ge2|x|\}}\frac{e^{-V(|y|)}|V'(|y|)|y_i}{|y|}\bigg[\frac{1}{|x-y|^{d+\alpha-2}}+\frac{1}{(|x-y|^2+4x_iy_i)^{(d+\alpha-2)/2}}\bigg]\,dy\Big)^2\\
	&\quad+3\Big(\int_{\{y_i>0,|x|/2\le|x-y|\le2|x|\}}\frac{e^{-V(|y|)}|V'(|y|)|y_i}{|y|}\bigg[\frac{1}{|x-y|^{d+\alpha-2}}-
\frac{1}{(|x-y|^2+4x_iy_i)^{(d+\alpha-2)/2}}\bigg]dy\Big)^2 \\
	&\le 6\Big(\int_{\{|x-y|\le|x|/2\}}\frac{e^{-V(|y|)}|V'(|y|)||y_i|}{|y|}\frac{1}{|x-y|^{d+\alpha-2}}\,dy\Big)^2\\
	&\quad+6\Big(\int_{\{|x-y|\ge2|x|\}}\frac{e^{-V(|y|)}|V'(|y|)||y_i|}{|y|}\frac{1}{|x-y|^{d+\alpha-2}}\,dy\Big)^2\\
	&\quad+ 3 \Big(\int_{\{y_i>0,|x|/2\le|x-y|\le2|x|\}}\frac{e^{-V(|y|)}|V'(|y|)|y_i}{|y|}\bigg[\frac{1}{|x-y|^{d+\alpha-2}}-\frac{1}{(|x-y|^2+4x_iy_i)^{(d+\alpha-2)/2}}\bigg]\,dy\Big)^2 \\
		&=:6\text{I}_{i1}+6\text{I}_{i2}+3\text{I}_{i3}.
\end{align*}
When $x_i<0$, similarly we have
\begin{align*}
	\text{I}_i&=\Big(\int_{\{y_i<0\}}\frac{e^{-V(|y|)}V'(|y|)y_i}{|y|}\bigg[\frac{1}{|x-y|^{d+\alpha-2}}-\frac{1}{(|x-y|^2+4x_iy_i)^{(d+\alpha-2)/2}}\bigg]\,dy\Big)^2\\
&\le\Big(\int_{\{y_i<0\}}\frac{e^{-V(|y|)}|V'(|y|)||y_i|}{|y|}\bigg[\frac{1}{|x-y|^{d+\alpha-2}}-\frac{1}{(|x-y|^2+4x_iy_i)^{(d+\alpha-2)/2}}\bigg]\,dy\Big)^2\\
	&\le6{\rm I}_{i1}+6{\rm I}_{i2}\\
&\quad+3\Big(\int_{\{y_i<0,|x|/2\le|x-y|\le2|x|\}}
\frac{e^{-V(|y|)}|V'(|y|)||y_i|}{|y|}\bigg[\frac{1}{|x-y|^{d+\alpha-2}}-\frac{1}{(|x-y|^2+4x_iy_i)^{(d+\alpha-2)/2}}\bigg]\,dy\Big)^2\\
	&=:6\text{I}_{i1}+6\text{I}_{i2}+3 \tilde{\text{I}} _{i3}.
	\end{align*}

Next, we estimate the above terms respectively. For $\text{I}_{i1}$, we have that for $x\in \R^d$ with $|x|$ large enough,
\begin{align*}
\Big(\sum_{i=1}^d \text{I}_{i1}\Big)^{1/2}&\le\sqrt{d}\int_{\{|x-y|\le |x|/2\}}e^{-V(|y|)}|V'(|y|)|\frac{1}{|x-y|^{d-(2-\alpha)}}dy\\
&\le \sqrt{d}\sup_{\{|x-y|\le|x|/2\}}\{e^{-V(|y|)}|V'(|y|)|\}\int_{|x-y|\le |x|/2}\frac{1}{|x-y|^{d-(2-\alpha)}}dy\\
&\le \frac{c_1}{|x|^{\alpha-2}}\sup_{|y|\ge|x|/2}\{e^{-V(|y|)}|V'(|y|)|\} \le\frac{c_12^{d+1}}{|x|^{d+\alpha-1}}\sup_{|y|\ge|x|/2}\{e^{-V(|y|)}|V'(|y|)||y|^{d+1}\} \\
&\le  \frac{c_2}{|x|^{d+\alpha-1}},
	\end{align*}
where the last inequality follows from \eqref{e:i-}.

For $\text{I}_{i2}$, we have that for $x\in \R^d$ with $|x|$ large enough,
\begin{align*}
\Big(\sum_{i=1}^d \text{I}_{i2}\Big)^{1/2}&\le \sqrt{d}\int_{\{|x-y|\ge 2|x|\}}e^{-V(|y|)}|V'(|y|)|\frac{1}{|x-y|^{d-(2-\alpha)}}\,dy\\
&\le  \frac{\sqrt{d}2^{2-d-\alpha}}{|x|^{d-(2-\alpha)}}\int_{\{|y|\ge |x|\}}e^{-V(|y|)}|V'(|y|)|\,dy \le  \frac{c_3}{|x|^{d-(2-\alpha)}}\int_{\{|y|\ge |x|\}}|y|^{-d-1}\,dy\\
&\le  \frac{c_4}{|x|^{d+\alpha-1}},
	\end{align*} where in the third inequality we used \eqref{e:i-} again.

To estimate $\text{I}_{i3}$, define
$$
f(r)=\frac{1}{(|x-y|^2+r)^{(d+\alpha-2)/2}},\quad r\ge 0.
$$
By the Lagrange mean value theorem, for any $y\in\R^d$ with $|x|/2\le |x-y|\le 2|x|$ and $y_i>0$, and any $x_i\ge0$, there exists  $\theta_i\in [0,4x_iy_i]$ such that
\begin{align*}
f(0)-f(4x_iy_i)&=-4x_iy_if'(\theta_i) =\frac{d+\alpha-2}{2}\frac{4x_iy_i}{(|x-y|^2+\theta_i)^{(d+\alpha)/2}}\\
&\le 2(d+\alpha-2)\frac{|x|y_i}{|x-y|^{d+\alpha}} \le \frac{c_5y_i}{|x|^{d+\alpha-1}}.
	\end{align*} Note that it always holds that $f(0)-f(4x_iy_i)>0$.
Therefore, for all $x\in \R^d$, according to \eqref{e:ii-},
\begin{align*}
{\text{I}}_{i3}\le & \frac{c_6}{|x|^{2(d+\alpha-1)}}\Big(\int_{\{y_i>0,
	|x|/2\le |x-y|\le2|x|\}}e^{-V(|y|)}|V'(|y|)||y|\,dy\Big)^2\\
\le &\frac{c_6}{|x|^{2(d+\alpha-1)}}\Big(\int_{\{
	|x|/2\le |x-y|\le2|x|\}}e^{-V(|y|)}|V'(|y|)||y|\,dy\Big)^2\\
\le &\frac{c_6}{|x|^{2(d+\alpha-1)}}\Big(\int_{\{
 |y|\le3|x|\}}e^{-V(|y|)}|V'(|y|)||y|\,dy\Big)^2\le  \frac{c_7}{|x|^{2(d+\alpha-1)}}\end{align*} and so
$$ \Big(\sum_{i=1}^d \text{I}_{i3}\Big)^{1/2}\le \frac{c_8}{|x|^{d+\alpha-1}}.$$

Similarly, we also can prove that for all $x\in \R^d$,
$$
 \Big(\sum_{i=1}^d \tilde{\text{I}}_{i3}\Big)^{1/2}\le\frac{c_9}{|x|^{d+\alpha-1}}.
$$

Combining all the estimates above, we can obtain that there exists a constant $c>0$ such that for all $x\in \R^d$ with $|x|\ge 1$ large enough,
$$
|\tilde{b}(x)|\le  \frac{c}{|x|^{d+\alpha-1}};
$$
that is, $$|b(x)|\le \frac{ce^{V(|x|)}}{|x|^{d+\alpha-1}}.$$ The proof is completed, since $b(x)$ is locally bounded.
\end{proof}

\begin{remark} (1) If condition \eqref{e:i-} is strengthened into $\limsup\limits_{r\to\infty} [e^{-V(r)}V'(r)r^{d+1}]=0,$ then both terms $\Big(\sum_{i=1}^d \text{I}_{i1}\Big)^{1/2}$ and $\Big(\sum_{i=1}^d \text{I}_{i2}\Big)^{1/2}$ are $o\left(\frac{1}{|x|^{d+\alpha-1}}\right)$ for all $x\in \R^d$ with $|x|$ large enough. Hence, the remaining term  $\Big(\sum_{i=1}^d {\text{I}}_{i3}\Big)^{1/2}$  or $\Big(\sum_{i=1}^d \tilde{\text{I}}_{i3}\Big)^{1/2}$ plays the lead role in the estimates above.

(2) Under the assumptions of Lemmas \ref{Lemma3.1} and \ref{Lemma3.2}, it holds that, for $|x|$ large enough,
$$\langle x, b(x)\rangle \asymp -\frac{e^{V(|x|)}}{(1+|x|)^{d+\alpha}}|x|^2.$$    \end{remark}

\subsection{The case of $d\le 2-\alpha$}\label{sect22}
In this part, we will consider the case of $d\le 2-\alpha$, i.e., $d=1$ and $0<\alpha\le1$.
Let $V \in C^2(\R)$ be such that $e^{-V}\in L^1(\R;dx)\cap C_b^2(\R)$, and let $b(x)$ be defined by \eqref{e:one}.
We first show that
\begin{lemma}\label{Lemma:one1} Let $V \in C^2(\R)$ be such that $e^{-V}\in L^1(\R;dx)\cap C_b^2(\R)$. If \begin{equation}\label{cod-111}
  \limsup_{|x|\rightarrow \infty}[|x|^3e^{-V(x)}|V'(x)^2-V''(x)|]<\infty,
\end{equation}
 then $b(x)$ given by \eqref{e:one} is well defined. \end{lemma}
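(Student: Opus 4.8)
The plan is to set $g := e^{-V}$, so that $g \in C_b^2(\R) \cap L^1(\R)$ with $g \ge 0$, and to note that, since $g'' = (V'^2 - V'')e^{-V}$, the hypothesis \eqref{cod-111} says exactly that $g''(u) = O(|u|^{-3})$ as $|u| \to \infty$. To prove that $b(x) = -e^{V(x)}\int_{-\infty}^x (-\Delta)^{\alpha/2} g(u)\,du$ is well defined it then suffices to establish two facts: \textbf{(a)} $(-\Delta)^{\alpha/2}g$ is bounded on $\R$ (hence has no local integrability issue on any half-line $(-\infty,x)$), and \textbf{(b)} $(-\Delta)^{\alpha/2}g(u) = O(|u|^{-1-\alpha})$ as $|u| \to \infty$; since $1+\alpha>1$, fact (b) makes $\int_{-\infty}^x (-\Delta)^{\alpha/2}g(u)\,du$ absolutely convergent, and combining it with (a) yields finiteness of $\int_{-\infty}^x|(-\Delta)^{\alpha/2}g(u)|\,du$ for every $x$. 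Throughout I would work with the symmetric second-difference representation
\[
-(-\Delta)^{\alpha/2}g(u) = \frac{c_{1,\alpha}}{2}\int_\R \frac{g(u+h)+g(u-h)-2g(u)}{|h|^{1+\alpha}}\,dh,
\]
which is valid for $g \in C_b^2(\R)$ and every $\alpha \in (0,1]$ because the integrand is $O(h^2)$ near $h = 0$ and $O(|h|^{-1-\alpha})$ near infinity.

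For (a) I would split the $h$-integral at $|h| = 1$ and bound the numerator by $\|g''\|_\infty h^2$ on $\{|h| \le 1\}$ and by $4\|g\|_\infty$ on $\{|h| > 1\}$; both contributions are finite uniformly in $u$, so $(-\Delta)^{\alpha/2}g$ is bounded.

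For (b), fix $|u|$ large and split the $h$-integral at $|h| = |u|/2$. On $\{|h| \le |u|/2\}$ every point $v$ between $u-h$ and $u+h$ satisfies $|v| \ge |u|/2$, so $|g''(v)| \le C(|u|/2)^{-3}$ by \eqref{cod-111}; Taylor's formula then gives $|g(u+h)+g(u-h)-2g(u)| \le C'|u|^{-3}h^2$, and integrating against $|h|^{-1-\alpha}$ over $\{|h|\le|u|/2\}$ produces a term of order $|u|^{-3}\int_0^{|u|/2}h^{1-\alpha}\,dh \asymp |u|^{-1-\alpha}$ (here $\alpha=1$ is the borderline but still admissible case). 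On $\{|h| > |u|/2\}$ the part coming from $g(u+h)+g(u-h)$ is at most $2(|u|/2)^{-1-\alpha}\|g\|_{L^1(\R)}$, while the part coming from $-2g(u)$ is of order $g(u)|u|^{-\alpha}$. To control this last term I would first prove the auxiliary decay estimate $g(u) = O(|u|^{-1})$: since $g'' = O(|u|^{-3})$ is integrable near $\pm\infty$, $g'$ has finite limits at $\pm\infty$, which must vanish because $g$ is bounded, so $g'(u) = O(|u|^{-2})$ upon integrating; this makes $g'$ integrable near $\pm\infty$, and since $g \in L^1(\R)$ is uniformly continuous it tends to $0$ at $\pm\infty$, whence $g(u) = O(|u|^{-1})$ upon integrating once more. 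Thus $g(u)|u|^{-\alpha} = O(|u|^{-1-\alpha})$, and collecting all the pieces yields (b).

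Finally, for a fixed $x$ pick $M > |x|$; then $\int_{-\infty}^{-M}|(-\Delta)^{\alpha/2}g(u)|\,du \le C\int_{-\infty}^{-M}|u|^{-1-\alpha}\,du < \infty$ by (b), and $\int_{-M}^{x}|(-\Delta)^{\alpha/2}g(u)|\,du \le (x+M)\,\|(-\Delta)^{\alpha/2}g\|_\infty < \infty$ by (a), so the integral defining $b(x)$ converges absolutely and $b$ is well defined on all of $\R$. I expect the only genuine obstacle to be step (b), and within it the term $g(u)|u|^{-\alpha}$: it is not controlled by \eqref{cod-111} directly but needs the bootstrap $g'' = O(|u|^{-3}) \Rightarrow g' = O(|u|^{-2}) \Rightarrow g = O(|u|^{-1})$, which uses $g \in L^1(\R) \cap C_b^1(\R)$ to fix the integration constants — and the exponent $3$ appearing in \eqref{cod-111} is precisely calibrated so that the resulting decay of $(-\Delta)^{\alpha/2}g$ lands exactly at the integrable threshold $|u|^{-1-\alpha}$.
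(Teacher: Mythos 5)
Your proof is correct, and structurally parallels the paper's: split the fractional Laplacian integral at radius $|u|/2$, apply Taylor's theorem together with the cubic decay $(e^{-V})''(u) = e^{-V(u)}(V'(u)^2 - V''(u)) = O(|u|^{-3})$ on the inner part, and control the outer part using $\|e^{-V}\|_{L^1}$. Two genuine points of divergence are worth flagging. First, you work with the symmetric second-difference representation of $(-\Delta)^{\alpha/2}$, while the paper uses the compensated first-difference form; both are valid for $C_b^2$ functions and $\alpha\in(0,2)$, and yours is marginally tidier since the odd compensator vanishes by symmetry. Second, and more substantively, for the contribution $e^{-V(u)}\int_{\{|h|>|u|/2\}}|h|^{-1-\alpha}\,dh \asymp e^{-V(u)}|u|^{-\alpha}$ you set up the bootstrap $g''=O(|u|^{-3}) \Rightarrow g'=O(|u|^{-2}) \Rightarrow g=O(|u|^{-1})$ (using boundedness of $g$ and $g\in L^1(\R)$ to identify the integration constants), which yields the clean pointwise estimate $(-\Delta)^{\alpha/2}e^{-V}(u)=O(|u|^{-1-\alpha})$. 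The paper avoids this: it never establishes pointwise decay of $e^{-V}$, instead observing that $|(-\Delta)^{\alpha/2}e^{-V}(u)| \lesssim |u|^{-1-\alpha}+e^{-V(u)}$ is integrable over $\R$ simply because $e^{-V}\in L^1(\R)$, which already gives absolute convergence of $\int_{-\infty}^x(-\Delta)^{\alpha/2}e^{-V}(u)\,du$ for every $x$. Your route costs more work but buys the sharper pointwise decay, which the paper only obtains later (Lemma \ref{Lemma5.2}) under an additional hypothesis; for the purposes of Lemma \ref{Lemma:one1} itself, the $L^1$ shortcut suffices and is leaner.
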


\begin{proof} Since $e^{-V}\in C_b^2(\R)$, we know that $-(-\Delta)^{\alpha/2}e^{-V(x)}\in C_b(\R)$, and so $-(-\Delta)^{\alpha/2}e^{-V(x)}$ is locally integrable on $\R$. Next, we will estimate $(-\Delta)^{\alpha/2}e^{-V(x)}$ for $x<-1$ small enough.
For $x<-1$,
\begin{align*}
-(-\Delta)^{\alpha/2}e^{-V(x)}&=\int_{\R}\big(e^{-V(x+z)}-e^{-V(x)}+e^{-V(x)}V'(x)z\I_{\{|z|\le 1\}}\big)\frac{c_{1,\alpha}}{|z|^{1+\alpha}}\,dz\\
& =\int_{\{|z|<-x/2\}}\big(e^{-V(x+z)}-e^{-V(x)}+e^{-V(x)}V'(x)z\big)\frac{c_{1,\alpha}}{|z|^{1+\alpha}}\,dz\\
&\quad +\int_{\{|z|\ge -x/2\}}(e^{-V(x+z)}-e^{-V(x)})\frac{c_{1,\alpha}}{|z|^{1+\alpha}}\,dz\\
&=:I_1(x)+I_2(x).
\end{align*}

Since
\begin{align*}|I_2(x)|\le & \int_{\{|z|\ge -x/2\}} e^{-V(x+z)} \frac{c_{1,\alpha}}{|z|^{1+\alpha}}\,dz+ \int_{\{|z|\ge -x/2\}} e^{-V(x)} \frac{c_{1,\alpha}}{|z|^{1+\alpha}}\,dz\\
\le&c_1\left( |x|^{-1-\alpha}+ e^{-V(x)}\right),\end{align*}
 by
$e^{-V}\in L^1(\R;dx)$ we know that $\int_{\R}|I_2(x)|\,dx<\infty$.
 On the other hand, by the mean value theorem,
 \begin{equation}\label{e:refer}\begin{split}
|I_1(x)|&\le \int_{\{|z|< -x/2\}}|e^{-V(x+z)}-e^{-V(x)}+e^{-V(x)}V'(x)z|\frac{c_{1,\alpha}}{|z|^{1+\alpha}}\,dz\\
&\le
c_{1,\alpha}\Big[\sup_{ 3x/2\le u\le
 x/2}|e^{-V(u)}(V'(u)^2-V''(u))|\Big]\int_0^{-x/2}z^{1-\alpha}\,dz\\
&\le c_2(- x)^{2-\alpha}
\sup_{ 3x/2\le u\le
 x/2} \big[ e^{-V(u)}|V'(u)^2-V''(u)|\big] \\
&\le 8 c_2(- x)^{-1-\alpha} \sup_{ 3x/2\le u\le
	x/2} \big[|u|^3 e^{-V(u)}|V'(u)^2-V''(u)|\big] \\
& \le c_3 (-x)^{-1-\alpha},
\end{split}\end{equation} where in the last inequality we used \eqref{cod-111}.
Note that analogous estimates hold also for $x > 1$ large enough, and hence we arrive at the desired assertion.
 \end{proof}
\begin{remark}\label{Rem-00}
From the proof above, we can see that under the assumptions of Lemma \ref{Lemma:one1},
\begin{equation}\label{e:integrability}
\int_{\R}| (-\Delta)^{\alpha/2} e^{-V(u)}|\,du<\infty
\end{equation}
and hence
$$\int_{x}^\infty (-\Delta)^{\alpha/2} e^{-V(u)}\,du$$ is also well defined for any $x \in \R$.
\end{remark}

In the following, we always assume that \eqref{cod-111} holds. We further suppose that $V(x)=V(-x)$ for all $x\in \R$. Then, we claim that
\begin{lemma}\label{Lemma:second2}  Let $V \in C^2(\R)$ be such that $e^{-V}\in L^1(\R;dx)\cap C_b^2(\R)$. Suppose that \eqref{cod-111} holds and that $V(x)=V(-x)$ for all $x\in \R$. Then, $b(x)$ given by \eqref{e:one} is an anti-symmetric function on $\R$ $($i.e., $b(x)=-b(-x)$ for all $x\in \R$$)$ such that
\begin{equation}\label{e:driftd1}
b(x)=\begin{cases} e^{V(x)}\displaystyle\int_x^\infty (-\Delta)^{\alpha/2}e^{-V(z)}\,dz,&\quad x\ge0,\\
-e^{V( x)}\displaystyle\int_{-x}^\infty (-\Delta)^{\alpha/2}e^{-V(z)}\,dz,&\quad x<0.\end{cases}
\end{equation}
In particular,
 $b(0)=0$.
 Moreover, $b(x) \in C^1(\R)$ and is locally bounded. \end{lemma}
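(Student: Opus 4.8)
The plan is to reduce everything to properties of $g(u):=(-\Delta)^{\alpha/2}e^{-V(u)}$, which by Remark~\ref{Rem-00} lies in $L^1(\R)$ and, since $e^{-V}\in C_b^2(\R)$, is moreover bounded and continuous (cf.\ the proof of Lemma~\ref{Lemma:one1}). Hence $G(x):=\int_{-\infty}^x g(u)\,du$ is a well-defined function of class $C^1(\R)$ with $G'=g$, and by \eqref{e:one} we may write $b(x)=-e^{V(x)}G(x)$ for all $x\in\R$.

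The first key fact to establish is $\int_\R g(u)\,du=0$. I would obtain this by integrating the identity
$-(-\Delta)^{\alpha/2}e^{-V(x)}=c_{1,\alpha}\int_\R\big(e^{-V(x+z)}-e^{-V(x)}+e^{-V(x)}V'(x)z\I_{\{|z|\le1\}}\big)|z|^{-1-\alpha}\,dz$
over $x\in\R$ and applying Fubini's theorem: the inner $x$-integral of each term then vanishes, by translation invariance of Lebesgue measure for the first two and by $\int_\R e^{-V(x)}V'(x)\,dx=-\big(e^{-V}(+\infty)-e^{-V}(-\infty)\big)=0$ for the third (here $e^{-V}$ tends to $0$ at $\pm\infty$, being integrable and uniformly continuous). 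The interchange is legitimate because the double integral converges absolutely: on $\{|z|\le1\}$ one uses the Taylor estimate $\big|e^{-V(x+z)}-e^{-V(x)}+e^{-V(x)}V'(x)z\big|\le\tfrac12 z^2\sup_{|u-x|\le1}\big|\big(V'(u)^2-V''(u)\big)e^{-V(u)}\big|$ together with the bound $\big|\big(V'(u)^2-V''(u)\big)e^{-V(u)}\big|\le C(1+|u|)^{-3}$ coming from \eqref{cod-111} and the continuity of $u\mapsto\big(V'(u)^2-V''(u)\big)e^{-V(u)}$; on $\{|z|>1\}$ one bounds the bracket by $e^{-V(x+z)}+e^{-V(x)}$ and uses $e^{-V}\in L^1(\R)$. (Alternatively: $\int_\R g=\widehat g(0)$ and $\widehat g(\xi)=|\xi|^{\alpha}\,\widehat{e^{-V}}(\xi)$ vanishes at $\xi=0$.)

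The second key fact is that $g$ is even: $V(\cdot)=V(-\cdot)$ forces $e^{-V}$ to be even, and the substitution $z\mapsto-z$ in the integral defining $(-\Delta)^{\alpha/2}e^{-V}$ then yields $g(-u)=g(u)$. Combining the two facts, $\int_0^\infty g=\int_{-\infty}^0 g$ while their sum is $0$, so $\int_0^\infty g(u)\,du=0$. Then \eqref{e:driftd1} follows readily: for $x\ge0$, $G(x)=\int_{-\infty}^0 g+\int_0^x g=\int_0^\infty g+\int_0^x g=\int_0^x g=-\int_x^\infty g$, whence $b(x)=-e^{V(x)}G(x)=e^{V(x)}\int_x^\infty g(u)\,du$; for $x<0$, the substitution $v=-u$ and evenness of $g$ give $G(x)=\int_{-\infty}^x g(u)\,du=\int_{-x}^\infty g(v)\,dv$, whence $b(x)=-e^{V(x)}\int_{-x}^\infty g(v)\,dv$. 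In particular $b(0)=e^{V(0)}\int_0^\infty g=0$, and comparing the two formulas (again via $v=-u$ and evenness of both $g$ and $V$) gives $b(-x)=-b(x)$ for all $x\in\R$.

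Finally, $b\in C^1(\R)$ since $b=-e^{V}G$ is a product of two $C^1$ functions: $e^{V}\in C^1(\R)$ because $V\in C^2(\R)$, and $G\in C^1(\R)$ with $G'=g$ continuous; local boundedness is then immediate from continuity. The only genuinely delicate step is the Fubini justification of $\int_\R g=0$; everything else is bookkeeping with the reflection $u\mapsto-u$, and this is the point at which the evenness of $V$ is used.
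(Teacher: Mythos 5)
Your proof is correct and follows the same overall structure as the paper's: establish $\int_\R (-\Delta)^{\alpha/2}e^{-V} = 0$, note that $(-\Delta)^{\alpha/2}e^{-V}$ is even, conclude $\int_0^\infty (-\Delta)^{\alpha/2}e^{-V} = 0$, and then read off \eqref{e:driftd1}, $b(0)=0$, anti-symmetry, and $C^1$-regularity. The one place where you take a different route is the central identity $\int_\R (-\Delta)^{\alpha/2}e^{-V}\,dx = 0$: the paper works with the principal-value form, shows the $\varepsilon$-truncated inner integrals are dominated uniformly in $\varepsilon$ by an $L^1(dx)$ majorant (built from \eqref{cod-111}), then swaps $x\leftrightarrow y$ and uses anti-symmetry of $\frac{e^{-V(x)}-e^{-V(y)}}{|x-y|^{1+\alpha}}$; you instead work with the compensated form, justify a single Fubini interchange via the Taylor remainder bound plus \eqref{cod-111}, and then kill the inner $dx$-integral by translation invariance and $\int_\R (e^{-V})'\,dx = 0$. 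The two arguments rest on essentially the same quantitative estimate, so this is a cosmetic rather than substantive difference, and your Fourier-transform aside ($\widehat{g}(\xi)=|\xi|^\alpha\widehat{e^{-V}}(\xi)$ vanishes at $0$) is a nice sanity check. One small point worth making explicit when you invoke $\int_\R (e^{-V})'\,dx=0$: you should note that $(e^{-V})'\in L^1$ (e.g., it follows from the fact that the Taylor bracket is $x$-integrable while $e^{-V(\cdot+z)}-e^{-V(\cdot)}$ is, or directly since $(e^{-V})''=(V'^2-V'')e^{-V}\in L^1$ by \eqref{cod-111}), so that the fundamental theorem of calculus and $e^{-V}\to 0$ at $\pm\infty$ can legitimately be combined; as written you only argue $e^{-V}\to0$, which by itself does not give $\int(e^{-V})'=0$.
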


\begin{proof}
As mentioned in Remark \ref{Rem-00}, under the assumptions of this lemma,
we have \eqref{e:integrability}.
We will show that this yields
 \begin{equation}\label{e:22aux1}
 -\int_\R (-\Delta)^{\alpha/2}e^{-V(u)}\,du=0,
 \end{equation}
  and hence
 $$b(x)=-e^{V(x)}\int_{-\infty}^x (-\Delta)^{\alpha/2} e^{-V(u)}\,du =e^{V(x)}\int_{x}^\infty (-\Delta)^{\alpha/2} e^{-V(u)}\,du,\quad x\ge 0.$$
 Indeed,
for any $\varepsilon\in (0,1]$ and any $x\in \R$,
 \begin{align*}&\left|\int_{\{|y-x|\ge \varepsilon\}} \frac{(e^{-V(y)}-e^{-V(x)})}{|y-x|^{1+\alpha}}\,dy\right|\\
 &= \left|\int_{\{|z|\ge \varepsilon\}} \big(e^{-V(x+z)}-e^{-V(x)}+e^{-V(x)}V'(x)z\I_{\{|z|\le 1\}}\big)\frac{dz}{|z|^{1+\alpha}}\right|\\
 &\le  \int_{\{ |z|\le 1\}} \big|e^{-V(x+z)}-e^{-V(x)}+e^{-V(x)}V'(x)z\big|\frac{dz}{|z|^{1+\alpha}}  +\left|\int_{\{ |z|>1\}} \big(e^{-V(x+z)}-e^{-V(x)}\big)\frac{dz}{|z|^{1+\alpha}}\right|\\
 &\le\frac{1}{2} \|[e^{-{V}}]''\|_\infty\int_{\{|z|\le 1\}} \frac{|z|^2}{|z|^{1+\alpha}}\,dz + 2\|e^{-V}\|_\infty \int_{\{|z|>1\}} \frac{1}{|z|^{1+\alpha}}\,dz\\
 &\le c_1<\infty. \end{align*} On the other hand, for any $\varepsilon\in (0,1]$ and any $x\in \R$ with $|x|>2$ large enough,
  \begin{align*}&\left|\int_{\{|y-x|\ge \varepsilon\}} \frac{(e^{-V(y)}-e^{-V(x)})}{|y-x|^{1+\alpha}}\,dy\right|\\
  &=\left|\int_{\{|z|\ge \varepsilon\}}\big(e^{-V(x+z)}-e^{-V(x)}+e^{-V(x)}V'(x)z\I_{\{|z|\le |x|/2\}}\big)\frac{1}{|z|^{1+\alpha}}\,dz\right|\\
   &\le \int_{\{|z|\le |x|/2\}} \big|e^{-V(x+z)}-e^{-V(x)}+e^{-V(x)}V'(x)z\big| \frac{1}{|z|^{1+\alpha}}\,dz\\
  &\quad+\int_{\{|z|>|x|/2\}} e^{-V(x+z)}\frac{1}{|z|^{1+\alpha}}\,dz+e^{-V(x)}\int_{\{|z|>|x|/2\}}\frac{1}{|z|^{1+\alpha}}\,dz\\
  &\le c_2|x|^{-(1+\alpha)}+\frac{2^{1+\alpha}}{|x|^{1+\alpha}}\int_{\R}e^{-V(z)}\,dz+c_3 e^{-V(x)}, \end{align*}
where the first term in the last inequality follows from \eqref{cod-111} and the argument for \eqref{e:refer}.
Hence, there is a constant $c_4>0$
 such that for all $x\in \R$,  $$\sup_{\varepsilon\in (0,1]}\left|\int_{\{|y-x|\ge \varepsilon\}} \frac{(e^{-V(y)}-e^{-V(x)})}{|y-x|^{1+\alpha}}\,dy\right|\le c_4\left((1+|x|)^{-1-\alpha}+e^{-V(x)}\right).$$
Therefore,
 by
 using the dominated convergence theorem and
 changing
 the order of integration,
 we find that
 \begin{align*} -\int_\R (-\Delta)^{\alpha/2}e^{-V(x)}\,dx=&  c_{1,\alpha} \int_{\R}\lim_{\varepsilon \to0}\int_{\{|y-x|\ge \varepsilon\}} \frac{(e^{-V(y)}-e^{-V(x)})}{|y-x|^{1+\alpha}}\,dy\,dx\\
 =&c_{1,\alpha}\lim_{\varepsilon \to0}\int_{\R}\int_{\{|y-x|\ge \varepsilon\}} \frac{(e^{-V(y)}-e^{-V(x)})}{|y-x|^{1+\alpha}}\,dy\,dx\\
 =&-c_{1,\alpha}\lim_{\varepsilon \to0}\int_{\R}\int_{\{|y-x|\ge \varepsilon\}} \frac{(e^{-V(x)}-e^{-V(y)})}{|y-x|^{1+\alpha}}\,dx\,dy\\
=& - c_{1,\alpha}\int_{\R}\lim_{\varepsilon \to0}\int_{\{|x-y|\ge \varepsilon\}} \frac{(e^{-V(x)}-e^{-V(y)})}{|x-y|^{1+\alpha}}
\,dx\,dy\\
=&\int_\R (-\Delta)^{\alpha/2}e^{-V(y)}\,dy, \end{align*} which proves \eqref{e:22aux1}.

   On the other hand,
 \begin{equation}\label{e:22aux2}
 \begin{split}
  -\int_{-\infty}^0(-\Delta)^{\alpha/2}e^{-V(u)}\,du&=\int_{-\infty}^0 \int_{\R}\left(e^{-V(u+z)}-e^{-V(u)}- \big[e^{-V(u)}\big]'z\I_{\{|z|\le 1\}}\right)\frac{c_{1,\alpha}}{|z|^{1+\alpha}}\,dz\,du\\
 &= \int_{-\infty}^0\lim_{\varepsilon \to 0}\int_{\{|z|\ge \varepsilon\} }\left(e^{-V( u+z)}-e^{-V( u)} \right)\frac{c_{1,\alpha}}{|z|^{1+\alpha}}\,dz\,du\\
 &=\int_0^\infty\lim_{\varepsilon \to 0}\int_{\{|z|\ge \varepsilon\} }\left(e^{-V(-u+z)}-e^{-V(-u)} \right)\frac{c_{1,\alpha}}{|z|^{1+\alpha}}\,dz\,du\\
 &=\int_0^\infty\lim_{\varepsilon \to 0}\int_{\{|z|\ge \varepsilon\} } \left(e^{-V(-u+z)}-e^{-V(u)} \right)\frac{c_{1,\alpha}}{|z|^{1+\alpha}}\,dz\,du\\
 &=\int_0^\infty\lim_{\varepsilon \to 0}\int_{\{|z|\ge \varepsilon\} }\left(e^{-V(-u-z)}-e^{-V(u)} \right)\frac{c_{1,\alpha}}{|z|^{1+\alpha}}\,dz\,du\\
  &=\int_0^\infty\lim_{\varepsilon \to 0}\int_{\{|z|\ge \varepsilon\} }\left(e^{-V( u+z)}-e^{-V(u)} \right)\frac{c_{1,\alpha}}{|z|^{1+\alpha}}\,dz\,du\\
  &=\int_0^\infty \int_{\R }\left(e^{-V( u+z)}-e^{-V(u)} - \big[e^{-V(u)}\big]'z\I_{\{|z|\le 1\}} \right)\frac{c_{1,\alpha}}{|z|^{1+\alpha}}\,dz\,du\\
  &=-\int^{\infty}_0(-\Delta)^{\alpha/2}e^{-V(u)}\,du,
  \end{split}
   \end{equation} where in the third and the fifth equalities we changed the variables, and the fourth and the sixth equalities follow from the
   symmetry
   $V(x)=V(-x)$ for all $x\in \R$.
 Combining \eqref{e:22aux1}
 with \eqref{e:22aux2},
  we have
  \begin{equation}\label{e:integralZero}
   \int^{\infty}_0(-\Delta)^{\alpha/2}e^{-V(u)}\,du=0
   \end{equation}
    and so $b(0)=0$.
 Furthermore, by $ \int^{\infty}_0(-\Delta)^{\alpha/2}e^{-V(u)}\,du=0$ and $(-\Delta)^{\alpha/2}e^{-V(x)}= (-\Delta)^{\alpha/2}e^{-V(-x)}$ for all $x\in \R$ (which is also due to the
 symmetry
 $V(x)=V(-x)$ for all $x\in \R$), we can get that for any $x<0$,
 $$b(x)=-e^{V(x)}\int_{-\infty}^x (-\Delta)^{\alpha/2} e^{-V(u)}\,du=-e^{V( x)}\displaystyle\int_{-x}^\infty (-\Delta)^{\alpha/2}e^{-V(z)}\,dz.$$
 The desired assertion \eqref{e:driftd1} follows.

As we mentioned
in the proof of Lemma \ref{Lemma:one1},
since $e^{-V}\in C_b^2(\R)$, $(-\Delta)^{\alpha/2}e^{-V(x)}\in C_b(\R)$.
 By  \eqref{e:integralZero}, we can easily see that
$b(x) \in C^1(\R)$ and is locally bounded.
\end{proof}

The following statement is analogous to Lemma \ref{Lemma3.1}.

\begin{lemma}\label{L:lemma5.1}
Let $V\in C^2(\R)$ be a symmetric function on $\R$
such that $e^{-V}\in L^1(\R;dx)\cap C_b^2(\R)$ and \eqref{cod-111} holds. Suppose that
\begin{equation}\label{cod1_1}
\lim_{x\rightarrow \infty}xe^{-V(x)}=0
\end{equation}
and
\begin{equation}\label{cod1_2}
 \liminf_{x\rightarrow \infty}[x^3e^{-V(x)}(V'(x)^2-V''(x))]\ge0.
\end{equation}
Then there exist constants $c_1,\ c_2>0$ and $r_1>1$ such that for all $x\in\R$,
$$
xb(x)\le c_1\I_{\{|x|\le r_1\}}-c_2\frac{e^{V(x)}}{|x| ^{1+\alpha}}|x|^2\I_{\{|x|>r_1\}}.
$$
\end{lemma}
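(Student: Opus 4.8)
The plan is to reduce the statement to a single pointwise tail estimate for $(-\Delta)^{\alpha/2}e^{-V}$. Since $b$ is anti-symmetric and $V$ is symmetric (Lemma~\ref{Lemma:second2}), we have $xb(x)=|x|\,b(|x|)$ for every $x\in\R$, so it suffices to control $b(x)$ for $x>0$; on the bounded set $\{|x|\le r_1\}$ the local boundedness of $b$ (Lemma~\ref{Lemma:second2}) already gives $xb(x)\le|x|\,|b(x)|\le r_1\sup_{|y|\le r_1}|b(y)|=:c_1$. For $|x|>r_1$ I will use the representation \eqref{e:driftd1}, namely $b(x)=e^{V(x)}\int_x^\infty(-\Delta)^{\alpha/2}e^{-V(z)}\,dz$ for $x\ge0$, which makes sense thanks to \eqref{e:integrability}. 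The whole conclusion then follows from the claim that there are constants $c_*>0$ and $r_1>1$ with
$$(-\Delta)^{\alpha/2}e^{-V(z)}\le -c_*\,z^{-1-\alpha}\qquad\text{for all }z\ge r_1,$$
because integrating over $[x,\infty)$ gives $\int_x^\infty(-\Delta)^{\alpha/2}e^{-V(z)}\,dz\le-\tfrac{c_*}{\alpha}x^{-\alpha}$, hence $xb(x)=xe^{V(x)}\int_x^\infty(-\Delta)^{\alpha/2}e^{-V(z)}\,dz\le-\tfrac{c_*}{\alpha}\,\tfrac{e^{V(x)}}{x^{1+\alpha}}\,x^2$ for $x\ge r_1$, and then $xb(x)=|x|\,b(|x|)$ together with $V(x)=V(|x|)$ yields the asserted bound with $c_2=c_*/\alpha$.

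To prove the claim I will re-use the decomposition from the proof of Lemma~\ref{Lemma:one1}: for $z>2$ one has $-(-\Delta)^{\alpha/2}e^{-V(z)}=I_1(z)+I_2(z)$, where
$$I_1(z):=c_{1,\alpha}\int_{|w|<z/2}\big(e^{-V(z+w)}-e^{-V(z)}+e^{-V(z)}V'(z)w\big)\frac{dw}{|w|^{1+\alpha}}$$
and
$$I_2(z):=c_{1,\alpha}\int_{|w|\ge z/2}\big(e^{-V(z+w)}-e^{-V(z)}\big)\frac{dw}{|w|^{1+\alpha}};$$
the goal is to show $I_1(z)+I_2(z)\ge c_*\,z^{-1-\alpha}$ for $z$ large. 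For $I_1(z)$: Taylor's formula with integral remainder rewrites the integrand as $w^2\int_0^1(1-s)\,[e^{-V}]''(z+sw)\,ds$, where $[e^{-V}]''=\big((V')^2-V''\big)e^{-V}$; since $z+sw\ge z/2$ on the domain of integration, the hypothesis \eqref{cod1_2} yields, for any prescribed $\varepsilon>0$ and all $z$ large, $[e^{-V}]''(z+sw)\ge-\varepsilon(z/2)^{-3}$, whence (integrating $|w|^{1-\alpha}$ over $|w|<z/2$) $I_1(z)\ge-A\varepsilon\,z^{-1-\alpha}$ for a constant $A$ depending only on $\alpha$. For $I_2(z)$: one has $I_2(z)=c_{1,\alpha}\int_{|w|\ge z/2}e^{-V(z+w)}|w|^{-1-\alpha}\,dw-\tfrac{2^{1+\alpha}c_{1,\alpha}}{\alpha}\big(ze^{-V(z)}\big)z^{-1-\alpha}$; by \eqref{cod1_1} the subtracted term lies in $[0,B\varepsilon\,z^{-1-\alpha}]$ for $z$ large, with $B=2^{1+\alpha}c_{1,\alpha}/\alpha$, while the remaining integral — the decisive term — is bounded below by restricting $y=z+w$ to the bulk $\{|y|\le z/2\}\subset\{|y-z|\ge z/2\}$, on which $|y-z|\le 3z/2$:
$$c_{1,\alpha}\int_{|w|\ge z/2}e^{-V(z+w)}|w|^{-1-\alpha}\,dw\ \ge\ c_{1,\alpha}\,(3z/2)^{-1-\alpha}\int_{|y|\le z/2}e^{-V(y)}\,dy\ \ge\ 2c_*\,z^{-1-\alpha}$$
for $z$ large, since $\int_{|y|\le z/2}e^{-V(y)}\,dy\to\|e^{-V}\|_{L^1(\R)}>0$. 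Taking $\varepsilon$ with $(A+B)\varepsilon\le c_*$ and fixing the corresponding $r_1$ gives $I_1(z)+I_2(z)\ge\big(2c_*-(A+B)\varepsilon\big)z^{-1-\alpha}\ge c_*\,z^{-1-\alpha}$, i.e.\ the claim; combined with the bounded-region estimate and a relabelling of constants this completes the proof.

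The main obstacle is precisely this pointwise estimate: the three pieces — the local Taylor term $I_1(z)$, the self-interaction term $e^{-V(z)}z^{-\alpha}$ hidden inside $I_2(z)$, and the bulk term inside $I_2(z)$ — are all a priori of the same order $z^{-1-\alpha}$, so crude absolute-value bounds are useless and the correct sign of each must be extracted. Only the bulk term produces a genuinely negative value of $(-\Delta)^{\alpha/2}e^{-V(z)}$ of size $\asymp z^{-1-\alpha}$, and \eqref{cod1_2} (asymptotic convexity of $e^{-V}$, controlling $I_1$) together with \eqref{cod1_1} ($ze^{-V(z)}\to0$, controlling the self-interaction term) are exactly what forces the other two to be $o(z^{-1-\alpha})$ and hence unable to cancel the bulk contribution. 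Note that splitting at radius $z/2$, rather than at a fixed radius, is essential: it puts the region where $|y-z|^{-1-\alpha}$ is large into $I_1(z)$, where the $z^{-3}$-type control of $[e^{-V}]''$ applies, and leaves $|y-z|\gtrsim z$ throughout $I_2(z)$. This lemma is the one-dimensional counterpart of Lemma~\ref{Lemma3.1}, and it looks different only because the drift \eqref{e:one} is an antiderivative of $(-\Delta)^{\alpha/2}e^{-V}$ rather than a singular integral against $\nabla V$, so the tail behaviour of $(-\Delta)^{\alpha/2}e^{-V}$ itself drives the estimate.
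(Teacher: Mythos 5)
Your proof is correct and follows essentially the same strategy as the paper's: reduce to $x>0$ by anti-symmetry, handle the bounded region via local boundedness, and establish the key pointwise bound $-(-\Delta)^{\alpha/2}e^{-V(z)}\ge c_*z^{-1-\alpha}$ for large $z$ by splitting the nonlocal operator at radius $z/2$, using Taylor with \eqref{cod1_2} for the near part and isolating the bulk contribution $\asymp z^{-1-\alpha}\int_{|y|\le z/2}e^{-V}$ in the far part while controlling the self-interaction term with \eqref{cod1_1}. The only cosmetic differences are that you make the integration step from the pointwise bound on $(-\Delta)^{\alpha/2}e^{-V}$ to the bound on $b$ explicit (the paper leaves it implicit after saying it ``is sufficient to verify''), and you organise the $\varepsilon$-bookkeeping a bit more explicitly than the paper does.
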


\begin{proof} Since $b(x)$ is anti-symmetric,
we only need to consider $x\ge0$.
According to Lemma \ref{Lemma:second2},
$b(x) \in C^1(\R)$ and is
therefore
locally bounded. Hence, in order to prove the desired assertion, it is sufficient to verify that
there exists a constant $c>0$ such that for $x>0$ large enough
\begin{equation}\label{e:5.3}
-(-\Delta)^{\alpha/2}e^{-V(x)}\ge \frac{c}{x^{1+\alpha}}.
\end{equation}
To this end,  for $x>1$ we write
\begin{align*}
-(-\Delta)^{\alpha/2}e^{-V(x)}&=\int_{\R}\big(e^{-V(x+z)}-e^{-V(x)}+e^{-V(x)}V'(x)z\I_{\{|z|\le 1\}}\big)\frac{c_{1,\alpha}}{|z|^{1+\alpha}}\,dz\\
& =\int_{\{|z|<x/2\}}\big(e^{-V(x+z)}-e^{-V(x)}+e^{-V(x)}V'(x)z\big)\frac{c_{1,\alpha}}{|z|^{1+\alpha}}\,dz\\
&\quad +\int_{\{|z|\ge x/2\}}(e^{-V(x+z)}-e^{-V(x)})\frac{c_{1,\alpha}}{|z|^{1+\alpha}}\,dz\\
&=:I_1(x)+I_2(x).
\end{align*}

First, for $x>0$ large enough, we have
\begin{align*}
I_2(x)&=\int_{x/2}^\infty (e^{-V(x+z)}-e^{-V(x)})\frac{c_{1,\alpha}}{|z|^{1+\alpha}}\,dz+\int_{-x}^{-x/2}(e^{-V(x+z)}-e^{-V(x)})\frac{c_{1,\alpha}}{|z|^{1+\alpha}}\,dz\\
&\quad+\int_{-\infty}^{-x}(e^{-V(x+z)}-e^{-V(x)})\frac{c_{1,\alpha}}{|z|^{1+\alpha}}\,dz\\
&\ge \left(-e^{-V(x)}\int_{x/2}^\infty \frac{c_{1,\alpha}}{|z|^{1+\alpha}}\,dz\right)+ \left(\frac{c_{1,\alpha}}{x^{1+\alpha}}\int_0^{x/2}e^{-V(z)}\,dz-e^{-V(x)}\int_{x/2}^x \frac{c_{1,\alpha}}{|z|^{1+\alpha}}\,dz\right)\\
&\quad + \left(-e^{-V(x)}\int_{x}^\infty \frac{c_{1,\alpha}}{|z|^{1+\alpha}}\,dz\right)\\
&= \frac{c_{1,\alpha}}{x^{1+\alpha}}\int_0^{x/2}e^{-V(z)}\,dz -2e^{-V(x)}\int_{x/2}^\infty \frac{c_{1,\alpha}}{|z|^{1+\alpha}}\,dz\\
&\ge \frac{c_1}{x^{1+\alpha}}-c_2\frac{e^{-V(x)}}{x^\alpha}.
\end{align*}
On the other hand, by the Taylor theorem, for $x>0$ large enough,
\begin{align*}
I_1(x)&=\int_{\{|z|< x/2\}}(e^{-V(x+z)}-e^{-V(x)}+e^{-V(x)}V'(x)z)\frac{c_{1,\alpha}}{|z|^{1+\alpha}}\,dz\\
&\ge
c_{1,\alpha}\inf_{x/2\le z\le
3x/2}[e^{-V(z)}(V'(z)^2-V''(z))]\int_0^{x/2}z^{1-\alpha}\,dz\\
&=c_3 x^{2-\alpha}\inf_{x/2\le z\le 3x/2}[e^{-V(z)}(V'(z)^2-V''(z))]\\
&=c_3\frac{1}{x^{1+\alpha}} \Big[x^3\inf_{x/2\le z\le 3x/2}[e^{-V(z)}(V'(z)^2-V''(z))]\Big].
\end{align*}
Hence,  for $x>0$ large enough,
$$ -(-\Delta)^{\alpha/2}e^{-V(x)}\ge \frac{c_1}{x^{1+\alpha}}-\frac{c_2xe^{-V(x)}}{x^{1+\alpha}}
+c_3\frac{1}{x^{1+\alpha}} \Big[x^3\inf_{x/2\le z\le 3x/2}[e^{-V(z)}(V'(z)^2-V''(z))]\Big].$$  This along with \eqref{cod1_1} and \eqref{cod1_2} yields \eqref{e:5.3}. The proof is completed. \end{proof}

\begin{lemma}\label{Lemma5.2} Let $V\in C^2(\R)$ be a symmetric function on $\R$ such that
$e^{-V}\in L^1(\R;dx)\cap C_b^2(\R)$ and \eqref{cod-111} holds. If
\eqref{cod1_1} is satisfied,
then there exist constants $c_0>0$ and $r_1>1$ such that for all $x\in\R$ with $|x|\ge r_1$,
$$
b(x)\ge -c_0 \frac{e^{V(x)}}{|x| ^{ \alpha}}.
$$\end{lemma}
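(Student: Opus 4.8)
The plan is to use the anti-symmetry of $b$ and the symmetry of $V$ to reduce the claim to an upper bound for $b(x)$ when $x\ge 0$ is large, and to obtain this from a sharp pointwise decay estimate for $(-\Delta)^{\alpha/2}e^{-V}$ at infinity. By Lemma~\ref{Lemma:second2}, for $x\ge 0$ one has $b(x)=e^{V(x)}\int_x^\infty(-\Delta)^{\alpha/2}e^{-V(z)}\,dz$, so it suffices to produce constants $C,r_1>1$ with $|(-\Delta)^{\alpha/2}e^{-V(z)}|\le C z^{-1-\alpha}$ for all $z\ge r_1$: then $\big|\int_x^\infty(-\Delta)^{\alpha/2}e^{-V(z)}\,dz\big|\le (C/\alpha)\,x^{-\alpha}$ for $x\ge r_1$, hence $|b(x)|\le c_0\,e^{V(x)}/x^\alpha$, which in particular gives the claimed lower bound. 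For $x\le -r_1$ the estimate then follows from $b(x)=-b(-x)$ together with $e^{V(x)}=e^{V(-x)}$ and $|{-x}|=|x|$, so that one actually obtains the two-sided bound $|b(x)|\le c_0\,e^{V(x)}/|x|^\alpha$ for $|x|\ge r_1$.

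For the pointwise bound on $(-\Delta)^{\alpha/2}e^{-V}$, I would recycle the decomposition already used in the proofs of Lemmas~\ref{Lemma:one1} and \ref{L:lemma5.1}: for $z>1$ write $-(-\Delta)^{\alpha/2}e^{-V(z)}=I_1(z)+I_2(z)$, where $I_1(z)$ is the integral of $(e^{-V(z+w)}-e^{-V(z)}+e^{-V(z)}V'(z)w)\,c_{1,\alpha}|w|^{-1-\alpha}$ over $\{|w|<z/2\}$ and $I_2(z)$ is the integral of $(e^{-V(z+w)}-e^{-V(z)})\,c_{1,\alpha}|w|^{-1-\alpha}$ over $\{|w|\ge z/2\}$. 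The term $I_1(z)$ is controlled by a second-order Taylor expansion of $e^{-V}$ exactly as in \eqref{e:refer}, giving $|I_1(z)|\le c\,z^{-1-\alpha}$ by \eqref{cod-111}. For $I_2(z)$, estimating $|e^{-V(z+w)}-e^{-V(z)}|\le e^{-V(z+w)}+e^{-V(z)}$, splitting $\{|w|\ge z/2\}$ into $\{w\ge z/2\}$ and $\{w\le -z/2\}$, and substituting $u=z+w$ in each piece, one uses $e^{-V}\in L^1(\R;dx)$ to bound the $e^{-V(z+w)}$ contribution by $C_1 z^{-1-\alpha}$; the remaining part is exactly $e^{-V(z)}\int_{\{|w|\ge z/2\}}c_{1,\alpha}|w|^{-1-\alpha}\,dw=C_2\,e^{-V(z)}z^{-\alpha}$. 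Thus $|I_2(z)|\le C_1 z^{-1-\alpha}+C_2\,e^{-V(z)}z^{-\alpha}$.

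I expect the term $C_2\,e^{-V(z)}z^{-\alpha}$ to be the crux: a priori it decays only like $z^{-\alpha}$, which is too slow to integrate into an $O(x^{-\alpha})$ tail with the desired constant, and this is precisely where assumption \eqref{cod1_1} enters (so it is a slightly sharper bookkeeping of $I_2$ than in the proof of Lemma~\ref{Lemma:one1}, where this factor was crudely absorbed). Writing $e^{-V(z)}z^{-\alpha}=(z\,e^{-V(z)})\,z^{-1-\alpha}$ and invoking $\lim_{z\to\infty}z\,e^{-V(z)}=0$, this term is $o(z^{-1-\alpha})$, so $|I_2(z)|\le C_3\,z^{-1-\alpha}$ for $z$ large. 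Combining with the bound on $I_1$ yields $|(-\Delta)^{\alpha/2}e^{-V(z)}|\le C\,z^{-1-\alpha}$ for $z$ large (the case $z$ large and negative being identical by the symmetry of $V$), and the reduction in the first paragraph then completes the proof.
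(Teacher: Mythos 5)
Your proposal is correct and follows essentially the same decomposition as the paper: write $-(-\Delta)^{\alpha/2}e^{-V}=I_1+I_2$ for large arguments, bound $I_1$ via a second-order Taylor expansion and \eqref{cod-111}, and bound $I_2$ by splitting off the $e^{-V(z)}\int_{\{|w|\ge z/2\}}c_{1,\alpha}|w|^{-1-\alpha}\,dw$ piece. The paper's proof is extremely terse: it only states the one-sided estimate \eqref{e:eff}, namely $-(-\Delta)^{\alpha/2}e^{-V(x)}\le c_1 x^{-1-\alpha}$, and remarks that $I_1$ is controlled via \eqref{cod1_3}, omitting all other details. You correctly observe that, once one unwinds the anti-symmetry $b(x)=-b(-x)$ and the symmetry of $V$, the claimed bound for all $|x|\ge r_1$ is actually equivalent to a \emph{two-sided} bound $|b(x)|\le c_0 e^{V(x)}/x^\alpha$ for positive $x$, which in turn requires a two-sided pointwise bound $|(-\Delta)^{\alpha/2}e^{-V(z)}|\le C z^{-1-\alpha}$ (the one-sided \eqref{e:eff} alone controls only $\int_x^\infty g$ from above, hence only the lower bound on $b$ for $x>0$). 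Your proposal correctly supplies the missing lower bound on $-(-\Delta)^{\alpha/2}e^{-V}$ and, in doing so, pinpoints exactly where \eqref{cod1_1} is needed: to upgrade the $e^{-V(z)}z^{-\alpha}$ piece of $I_2$ to $o(z^{-1-\alpha})$, which is not required for the upper bound alone. So your write-up is a more careful and complete version of the paper's intended argument, making explicit a step that the paper leaves unstated.
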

\begin{proof}The assertion follows from the conclusion that
there exists a constant $c_1>0$ such that for $x>0$ large enough
\begin{equation}\label{e:eff}
-(-\Delta)^{\alpha/2}e^{-V(x)}\le \frac{c_1}{x^{1+\alpha}}.
\end{equation} For \eqref{e:eff}, one can follow the idea for the argument of \eqref{e:5.3}. In particular,
under \eqref{cod-111}  it holds that
\begin{equation}\label{cod1_3}
 \limsup_{x\rightarrow \infty}[x^3e^{-V(x)}(V'(x)^2-V''(x))]<\infty.
\end{equation}
Then
we can deduce that
$$I_1(x)\le \frac{c_2}{x^{1+\alpha}}$$ by applying \eqref{cod1_3} instead of \eqref{cod1_2}. The details are omitted here. \end{proof}

\begin{remark} Under the assumptions of Lemma
\ref{L:lemma5.1},
for $|x|$ large enough,
$$xb(x) \asymp -\frac{e^{V(x)}}{(1+|x|)^{1+\alpha}}|x|^2.$$    \end{remark}

\section{
Properties of the SDE with the fractional drift}\label{sectionFurther}
In this section, we will consider the following stochastic differential
equation (SDE)
\begin{equation}\label{p2-1-00}dX_t=b(X_t)\,dt+dZ_t,\end{equation}
where $(Z_t)_{t\ge0}$ is a symmetric (rotationally invariant) $\alpha$-stable process on
$\R^d$ with $\alpha\in (0,2)$ and $d\ge1$, and $b(x)$ is defined by \eqref{specialDrift} when $d>2-\alpha$ and by \eqref{e:one} when $d\le 2-\alpha$.
Everywhere below, we
assume that Assumption {\rm(A)} is satisfied.

Suppose
first
that $d>2-\alpha$. According to Lemmas \ref{L:lemma0} and \ref{Lemma3.1}, for the drift $b(x)$ defined by \eqref{specialDrift}, we have
$b\in C^\beta(\R^d)$ with
$\beta=2-\alpha$ when $\alpha\in (1,2)$, $\beta=1-\varepsilon$ for any $\varepsilon>0$ when $\alpha=1$, and $\beta=1$ when $\alpha\in (0,1)$ (in particular, $b\in C^\beta(\R^d)$ with $\beta\in (0,1-\alpha/2)$
for all $\alpha\in (0,2)$),
 and
\begin{equation}\label{p2-1-0}\langle b(x),x\rangle\le K(1+|x|^2),\quad x\in\R^d\end{equation} for some constant
$K>0$, where $C^\beta(\R^d)$ denotes the set of locally $\beta$-H\"{o}lder continuous functions from $\R^d$ to $\R^d$ for $\beta\in (0,1)$. Suppose
now
that $d\le 2-\alpha$. Then, by Lemmas \ref{Lemma:second2}
and \ref{L:lemma5.1},
the drift $b(x)$ defined by  \eqref{e:one} belongs to $C^1(\R)$ and satisfies \eqref{p2-1-0} as well.
Here we used the fact that \eqref{cod1_1}
holds
 under condition \eqref{e:1.1}
and hence under Assumption {\rm(A)}, all the conditions required in Lemmas \ref{Lemma:second2}
and \ref{L:lemma5.1} are satisfied.
 Consequently, for all $d \ge 1$ and $\alpha \in (0,2)$,
  the equation
\eqref{p2-1-00} has a unique non-explosive strong solution
$(X_t)_{t\ge0}$, which is a strong Markov process with the generator
$$Lf(x)=-(-\Delta)^{\alpha/2}f(x)+\langle b(x),\nabla f(x)\rangle,\quad f\in C_b^2(\R^d).$$
For the case of $d>2-\alpha$, the reader can be referred to  \cite[Theorem 2.4 and Lemma 7.1]{XZ},
while for $d \le 2-\alpha$ one can directly apply e.g.\ \cite[Theorem 1.1]{Majka2016}, since $b \in C^1(\R)$ obviously implies that $b(x)$ satisfies a local Lipschitz condition.
Alternatively,
for any $d \ge 1$ and $\alpha \in (0,2)$,
we can first apply \cite[Theorem 1.1]{Pr} or \cite[Corollary 1.4(i)]{CSZ} (with $b\in C_b^\beta(\R^d)$, i.e., with $b(x)$ being globally $\beta$-H\"{o}lder continuous) to get the locally unique strong solution, and then use the additional global one-sided linear growth
condition \eqref{p2-1-0} to obtain the unique non-explosive strong solution; see the proof of \cite[Theorem 1]{GyongyKrylov} or \cite[Theorem 1.1]{Majka2016}.

In the following, we will prove rigorously that \eqref{Gibbs} is indeed
 the unique invariant measure for the process $(X_t)_{t \ge 0}$ defined as the solution to \eqref{stableSDE} with the drift term $b(x)$ defined by \eqref{specialDrift} and \eqref{e:one}.

We begin with the following simple lemma.

\begin{lemma}\label{L:4.1}Under Assumption {\rm(A)}, for any $\beta\in (0,\alpha)$, there are constants $C_1,C_2>0$ such that for all $x\in \R^d$,
\begin{equation}\label{e:cougen1}L V_0(x)\le C_1- C_2 \frac{e^{V(x)}}{|x|^{d+\alpha}} V_0(x), \end{equation}where $V_0(x)=(1+|x|^2)^{\beta/2}$. \end{lemma}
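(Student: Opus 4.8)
The plan is to decompose the generator as
\[
LV_0(x)=-(-\Delta)^{\alpha/2}V_0(x)+\langle b(x),\nabla V_0(x)\rangle ,
\]
which makes sense because $V_0(x)=(1+|x|^2)^{\beta/2}$ is $C^2$ and grows like $|x|^\beta$ with $\beta<\alpha$, so $(-\Delta)^{\alpha/2}V_0$ is well defined pointwise; then to show that the nonlocal term is uniformly bounded while the drift term supplies the dissipative part $-C_2 e^{V(x)}(1+|x|)^{-(d+\alpha)}V_0(x)$ for large $|x|$. The conclusion then follows by the usual patching of the estimate for $|x|$ large with the trivial boundedness of all quantities on a compact set.

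For the nonlocal term I would first record that $\beta/2<1$ implies $|\nabla^2 V_0(x)|\le C(1+|x|^2)^{(\beta-2)/2}$, so $\nabla^2 V_0$ is bounded. Then, for $|x|\ge1$, write
\[
(-\Delta)^{\alpha/2}V_0(x)=-\frac{c_{d,\alpha}}{2}\int_{\R^d}\frac{V_0(x+z)+V_0(x-z)-2V_0(x)}{|z|^{d+\alpha}}\,dz
\]
and split the integral at $|z|=|x|/2$. On the near region a second-order Taylor expansion together with $|\nabla^2 V_0(\xi)|\le C|x|^{\beta-2}$ for $|\xi-x|\le|x|/2$ and $\int_{\{|z|\le|x|/2\}}|z|^{2-d-\alpha}\,dz\asymp|x|^{2-\alpha}$ (convergent since $\alpha<2$) gives a contribution $\le C|x|^{\beta-\alpha}$. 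On the far region the elementary bound $V_0(x\pm z)\le C(V_0(x)+V_0(z))$ (subadditivity of $t\mapsto t^{\beta/2}$) together with $\int_{\{|z|>|x|/2\}}|z|^{-d-\alpha}\,dz\asymp|x|^{-\alpha}$ and $\int_{\{|z|>|x|/2\}}V_0(z)|z|^{-d-\alpha}\,dz\asymp|x|^{\beta-\alpha}$ (convergent since $\beta<\alpha$) again gives $\le C|x|^{\beta-\alpha}$. Hence $|(-\Delta)^{\alpha/2}V_0(x)|\le C(1+|x|)^{\beta-\alpha}\le C_0$ for all $x$ (this is where $\beta<\alpha$ enters).

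For the drift term, $\nabla V_0(x)=\beta(1+|x|^2)^{\beta/2-1}x$, so $\langle b(x),\nabla V_0(x)\rangle=\beta(1+|x|^2)^{\beta/2-1}\langle b(x),x\rangle$. Under Assumption (A) all hypotheses of Lemma \ref{Lemma3.1} (case $d>2-\alpha$) resp.\ Lemma \ref{L:lemma5.1} (case $d\le2-\alpha$) hold, so for suitable $c_1,c_2>0$ and $r_1\ge1$ one has $\langle b(x),x\rangle\le c_1\I_{\{|x|\le r_1\}}-c_2 e^{V(x)}(1+|x|)^{-(d+\alpha)}|x|^2\I_{\{|x|>r_1\}}$. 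Since $(1+|x|^2)^{\beta/2-1}\le1$ and, for $|x|\ge1$, $(1+|x|^2)^{\beta/2-1}|x|^2=\frac{|x|^2}{1+|x|^2}V_0(x)\ge\frac12 V_0(x)$, this yields $\langle b(x),\nabla V_0(x)\rangle\le\beta c_1$ on $\{|x|\le r_1\}$ and $\langle b(x),\nabla V_0(x)\rangle\le-\frac{\beta c_2}{2}e^{V(x)}(1+|x|)^{-(d+\alpha)}V_0(x)$ on $\{|x|>r_1\}$. Taking $C_2:=\beta c_2/2$, we get $LV_0(x)\le C_0-C_2 e^{V(x)}(1+|x|)^{-(d+\alpha)}V_0(x)$ on $\{|x|>r_1\}$, while on the compact set $\{|x|\le r_1\}$ both $LV_0(x)$ and $e^{V(x)}(1+|x|)^{-(d+\alpha)}V_0(x)$ are bounded, say by $C'$ and $M$; choosing $C_1:=\max\{C_0,C'+C_2M\}$ then gives \eqref{e:cougen1} for all $x\in\R^d$ (with $(1+|x|)^{d+\alpha}$ in place of $|x|^{d+\alpha}$, which is the same up to a constant on $\{|x|\ge1\}$ and is the form actually needed in the sequel). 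The one genuinely delicate step is the sharp bound on $(-\Delta)^{\alpha/2}V_0$: the singular integral has to be cut at radius comparable to $|x|$, not at a fixed radius — a fixed-radius split only produces the too-weak estimate $C(1+V_0(x))$, which the drift decay cannot absorb given that \eqref{e:1.1} only provides the lower bound $e^{V(x)}(1+|x|)^{-(d+\alpha)}\ge c>0$.
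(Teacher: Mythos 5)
Your proof is correct, but it bounds the nonlocal term by a different (and in fact sharper) method than the paper. The paper splits the L\'evy integral at a \emph{fixed} radius $l$ and records that the ``near'' piece is $O(l^{2-\alpha})$ while the ``far'' piece is $\le c_2\,l^{-\alpha}(1+2|x|^2)^{\beta/2}+c_3$; the point is that the coefficient of $V_0(x)$ in this bound is $c_2\,l^{-\alpha}$ and can be made \emph{arbitrarily small} by choosing $l$ large, and since \eqref{e:1.1} gives $U(x):=e^{V(x)}(1+|x|)^{-(d+\alpha)}\ge c_0>0$ for $|x|$ large, the drift term $-\lambda_2\beta\,U(x)|x|^\beta$ absorbs both $c_2\,l^{-\alpha}|x|^\beta$ and the constant $c_1 l^{2-\alpha}$ once $|x|\gg l\gg1$. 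You instead cut the integral at the variable radius $|z|=|x|/2$ and obtain the intrinsic decay $|(-\Delta)^{\alpha/2}V_0(x)|\le C(1+|x|)^{\beta-\alpha}\to0$, after which the drift estimate finishes the job outright. Both routes work; yours gives a stronger pointwise bound on $(-\Delta)^{\alpha/2}V_0$ and makes the patching more transparent, while the paper's keeps the computations elementary at the cost of an interplay between the cutoff $l$ and $|x|$.

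One side remark in your write-up is inaccurate: you assert that ``a fixed-radius split only produces the too-weak estimate $C(1+V_0(x))$, which the drift decay cannot absorb.'' As just explained, the paper's fixed-radius split does produce a bound of the form $c_1 l^{2-\alpha}+c_3+c_2 l^{-\alpha}V_0(x)$, and the drift \emph{can} absorb it precisely because $U(x)\ge c_0>0$ (so the dissipative term is $\lesssim -c_0\,V_0(x)$) and because $c_2 l^{-\alpha}$ can be driven below $c_0$ by taking $l$ large. So the fixed-radius approach is not ``too weak''; the key is that the radius, although fixed relative to $x$, is chosen large enough in a preliminary step. This does not affect the validity of your own proof, which stands on its own. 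You also correctly identify that the inequality \eqref{e:cougen1} is really established with $(1+|x|)^{d+\alpha}$ in the denominator, the form used in the sequel.
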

\begin{proof} According to Lemmas \ref{Lemma3.1} and \ref{L:lemma5.1}, we know that under Assumption (A) there are constants $\lambda_1, \lambda_2>0$ such that
for all $x\in \R^d$,
\begin{equation}\label{e:123}\langle x, b(x)\rangle\le \lambda_1-\lambda_2 U(x)|x|^2, \end{equation} where $U(x)=e^{V(x)}/(1+|x|)^{d+\alpha}.$
Here, we used again the fact that \eqref{cod1_1} holds true under condition \eqref{e:1.1} .

Recall that $\frac{c_{d,\alpha}}{|z|^{d+\alpha}}$
is the density function of the L\'evy measure for the symmetric $\alpha$-stable process.
Since $\nabla V_0(x)=\beta(1+|x|^2)^{(\beta-2)/2}x$ and
$\| \nabla^2 V_0 \|_{\infty} \le
\beta\left( 2-\beta/2 \right)$,
we find that for all $x\in \R^d$ and $l\ge1$,
\begin{align*}LV_0(x)=&\beta(1+|x|^2)^{(\beta-2)/2} \langle x,b(x)\rangle+\int_{\{|z|\le l\}} (V_0(x+z)-V_0(x)-\langle\nabla V_0(x),z\rangle)\frac{c_{d,\alpha}}{|z|^{d+\alpha}}\,dz\\
&+\int_{\{|z|>l\}} (V_0(x+z)-V_0(x))\frac{c_{d,\alpha}}{|z|^{d+\alpha}}\,dz\\
\le&\beta(1+|x|^2)^{(\beta-2)/2} \langle x,b(x)\rangle+
(\beta/2)\left(2 - \beta/2 \right) c_{d,\alpha}
 \int_{\{|z|\le l\}}\frac{1}{|z|^{d+\alpha-2}}\,dz\\
&+\int_{\{|z|>l\}} [(1+2|x|^2)^{\beta/2}+(2|z|^2)^{\beta/2}]\frac{c_{d,\alpha}}{|z|^{d+\alpha}}\,dz\\
\le &\beta(1+|x|^2)^{(\beta-2)/2} (\lambda_1-\lambda_2U(x)|x|^2)+c_1 l^{2-\alpha}+c_2 l^{-\alpha} (1+2|x|^2)^{\beta/2}+c_3, \end{align*}where $c_i$ $(1\le i\le 3)$ are independent of $l$ and $x\in \R^d$.
Here, in the equality above, we used the fact that $\int_{\{1\le |z|\le l\}} z\frac{c_{d,\alpha}}{|z|^{d+\alpha}}\,dz=0$; the first inequality follows from the mean value theorem and the fact that $V_0(x+z)\le (1+2|x|^2+2|z|^2)^{\beta/2}\le (1+2|x|^2)^{\beta/2}+(2|z|^2)^{\beta/2}$; and in the last inequality we used  \eqref{e:123} and the facts that
$$\int_{\{|z|\le l\}}\frac{1}{|z|^{d+\alpha-2}}\,dz\le c_4l^{2-\alpha},\quad \int_{\{|z|>l\}}  \frac{1}{|z|^{d+\alpha}}\,dz\le c_4 l^{-\alpha}$$ and
$$\int_{\{|z|\ge 1\}}\frac{1}{|z|^{d+\alpha-\beta}}\,dz<\infty,\quad \beta\in [0,\alpha).$$
From the right hand side of the inequality above, we can see that $LV_0(x)$ is locally bounded, and for $|x|$ large enough,
$$LV_0(x)\le -\frac{\lambda_2\beta}{2} U(x){|x|^{\beta}}+ c_1l^{2-\alpha}+ 4c_2 l^{-\alpha}|x|^{\beta},$$ which is dominated by $-\frac{\lambda_2\beta}{4} U(x){|x|^{\beta}}$ by choosing $|x|\gg l\gg 1$. Then, \eqref{e:cougen1} follows.\end{proof}

We also need the following statement.

\begin{lemma}\label{sde} Let $(X_t)_{t\ge0}$ be the
unique strong solution to the SDE \eqref{p2-1-00}  with $b(x)$ defined by \eqref{specialDrift} when $d>2-\alpha$ and by \eqref{e:one} when $d\le 2-\alpha$, such that Assumption {\rm(A)} is satisfied.  Then,
\begin{itemize}
\item[{\rm(i)}] The process $(X_t)_{t\ge0}$ is strong Feller and Lebesgue irreducible;
\item[{\rm(ii)}]The transition probability function of the process $(X_t)_{t\ge0}$ is absolutely continuous with respect to the Lebesgue measure.
\end{itemize}
In particular, the process has a unique invariant probability measure $\mu(dx)=\rho(x)\,dx$, where $\rho(x)>0$ for all $x\in \R^d$. \end{lemma}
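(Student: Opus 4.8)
The plan is to deduce (i) and (ii) from the existence of a well-behaved transition kernel, obtained by localization, and then to combine these with the Lyapunov estimate of Lemma~\ref{L:4.1} to produce and identify the invariant measure. For $R\ge1$ I would take a bounded function $\tilde b_R$, globally $\beta$-H\"older when $d>2-\alpha$ and globally Lipschitz when $d\le2-\alpha$, that agrees with $b$ on the ball $B_R$. By the heat-kernel and strong Feller theory for $\alpha$-stable-driven SDEs with drifts of this regularity (see \cite{XZ,CSZ,Pr} for $d>2-\alpha$, \cite{Majka2016} for $d\le2-\alpha$, and the references therein), the SDE with drift $\tilde b_R$ has a solution $X^{(R)}$ whose semigroup killed on exiting $B_R$ has a jointly continuous, strictly positive transition density $p^{(R),B_R}_t(x,y)$ on $B_R\times B_R$. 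By pathwise uniqueness $X$ and $X^{(R)}$ agree up to the exit time $\tau_R$ of $X$ from $B_R$, and $\tau_R\uparrow\infty$ a.s.\ since $X$ is non-explosive; so on $\{\tau_R>t\}$ one has $X_t=X^{(R)}_t$, whence for $x\in B_{R/2}$ and bounded measurable $f$
\[
P_tf(x)=\Ee^x\big[f(X^{(R)}_t)\I_{\{\tau_R>t\}}\big]+r_R(x)=P^{(R),B_R}_tf(x)+r_R(x),\qquad |r_R(x)|\le\|f\|_\infty\,\Pp^x(\tau_R\le t).
\]
Since $LV_0\le C_1$ by Lemma~\ref{L:4.1}, Dynkin's formula with a standard localization gives $\Ee^x[V_0(X_{\tau_R\wedge t})]\le V_0(x)+C_1t$, and because $V_0(X_{\tau_R})\ge(1+R^2)^{\beta/2}$ this yields $\Pp^x(\tau_R\le t)\le(V_0(x)+C_1t)(1+R^2)^{-\beta/2}\to0$ as $R\to\infty$, uniformly for $x$ in compact sets.

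From this I would read off the three properties. Strong Feller: on every ball $P_tf$ is a uniform limit of the continuous functions $P^{(R),B_R}_tf$, so $P_t(B_b)\subset C_b$. Absolute continuity: taking $f=\I_A$ with $\leb(A)=0$ (so $P^{(R),B_R}_t\I_A\equiv0$) gives $P_t(x,A)=0$, which is (ii). Lebesgue irreducibility: for any $x$ and any Borel $A$ with $\leb(A)>0$, picking $R$ with $\leb(A\cap B_R)>0$ and using $P_t(x,C)\ge\Pp^x(X^{(R)}_t\in C,\tau_R>t)=P^{(R),B_R}_t(x,C)$ for $C\subset B_R$, one gets $P_t(x,A)\ge\int_{A\cap B_R}p^{(R),B_R}_t(x,y)\,dy>0$; together with strong Feller this proves (i). In particular $P_t(z,dy)=p_t(z,y)\,dy$ with $p_t(z,y)\ge p^{(R),B_R}_t(z,y)>0$ for all $z,y\in B_R$.

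It then remains to identify the invariant measure. Since $V$ is continuous and \eqref{e:1.1} holds, $U(x)=e^{V(x)}/(1+|x|)^{d+\alpha}$ satisfies $c_0:=\inf_{\R^d}U>0$, so Lemma~\ref{L:4.1} gives the global drift condition $LV_0\le C_1-C_2c_0V_0$. Dynkin's formula with localization then yields $\frac1t\Ee^x\int_0^tV_0(X_s)\,ds\le\frac{V_0(x)}{C_2c_0t}+\frac{C_1}{C_2c_0}$, so the time averages $\frac1t\int_0^tP_s(x,\cdot)\,ds$ are tight (the sublevel sets of $V_0$ are compact), and since $P_t(C_b)\subset C_b$ any weak limit point is an invariant probability measure $\mu$. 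Uniqueness of $\mu$ is the classical fact that a strong Feller, Lebesgue-irreducible Markov semigroup has at most one invariant probability measure (see e.g.\ \cite{Kha}). Finally, invariance gives $\mu(dy)=\big(\int_{\R^d}p_t(z,y)\,\mu(dz)\big)dy$, so $\mu(dx)=\rho(x)\,dx$ with $\rho(x)=\int_{\R^d}p_t(z,x)\,\mu(dz)>0$ for every $x$, since $p_t(z,x)>0$ for all $z$ in a ball containing $x$ and $\mu$ is a probability measure.

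The hard part will be the localization step. The drift is only locally $(2-\alpha)$-H\"older when $\alpha\in[1,2)$, which is below the global regularity required by most off-the-shelf strong Feller and heat-kernel results for $\alpha$-stable-driven SDEs, so the truncation to $B_R$ must be set up carefully; more importantly, one genuinely needs the \emph{joint continuity and strict positivity} of the Dirichlet heat kernels $p^{(R),B_R}_t$ on $B_R\times B_R$ — equivalently, a parabolic Harnack inequality for the killed stable-like processes — and locating a usable version of this in the literature is the delicate point. (For $d>2-\alpha$ one may alternatively be able to invoke \cite{XZ} directly for the strong Feller property and irreducibility, bypassing the truncation.) Once this input is in hand, the escape estimate from Lemma~\ref{L:4.1}, the tightness and Krylov--Bogoliubov argument, the Doob-type uniqueness, and the density formula for $\mu$ are all routine.
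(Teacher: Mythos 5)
Your plan matches the paper's proof in its essential structure: truncate the drift to get a bounded, globally Hölder (or Lipschitz) drift $b_n$ agreeing with $b$ on a large ball, invoke heat-kernel results for the truncated SDE, use pathwise uniqueness to identify $X$ with $X^{(n)}$ up to $\tau_n$, control $\Pp^x(\tau_n\le t)$ via the Lyapunov function $V_0$ from Lemma~\ref{L:4.1}, deduce strong Feller, absolute continuity, and irreducibility by localization, and conclude uniqueness of the invariant measure from strong Feller plus irreducibility.

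There are two minor but worthwhile divergences. First, you route both the strong Feller property and absolute continuity through the \emph{killed} (Dirichlet) semigroup and its kernel, which forces you to need joint continuity \emph{and} positivity of the Dirichlet heat kernel; you rightly flag this as the delicate input. The paper avoids this by using the \emph{full-space} semigroup $P^{(n)}_t$ of $X^{(n)}$ for strong Feller and absolute continuity — observing $|P_tf(x)-P_t^{(n)}f(x)|\le 2\|f\|_\infty\Pp^x(\tau_n\le t)$, so $P_tf$ is a locally uniform limit of the continuous functions $P^{(n)}_tf$ — and only needs strict \emph{positivity} of the Dirichlet heat kernel for irreducibility, which \cite{CKS} supplies from the Lévy system and global heat kernel bounds. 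This split lets the paper cite \cite{CZ} (for $\alpha\in(1,2)$), \cite{XZ1} (for $\alpha=1$), and \cite{Kul} (for $\alpha\in(0,1)$) for full-space kernel continuity and positivity, which are directly available, rather than hunting for a Dirichlet-kernel continuity result. Second, you obtain existence of the invariant measure by Krylov--Bogoliubov plus tightness from the drift condition, whereas the paper notes that strong Feller plus irreducibility make compact sets petite (via \cite[Theorem 4.1(i)]{MT2}) and invokes \cite[Theorems 5.1, 5.2]{MT}; both are standard and work here. Your truncation is a generic compactly supported cut-off; the paper's specific choice $b_n=-C_{d,2-\alpha}e^{V\wedge K(n)}\int\cdots$ is a clean concrete instance of the same thing (the integral factor is already bounded and globally Hölder).
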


 \begin{proof}  For simplicity, we only consider the case of $d>2-\alpha$, since the case of $d\le 2-\alpha$ can be proved similarly and easily.

 {\rm(i)} For any $n\ge1$, let $$b_n(x) = -C_{d,2-\alpha} e^{V(x)\wedge K(n)} \int_{\R^d} \frac{e^{-V(y)} \nabla V(y)}{|x-y|^{d-(2-\alpha)}}\, dy,$$ where
 $$K(n)=1+\sup_{|x|\le n}|V(x)|.$$
 Then, according to the proof of Lemma \ref{L:lemma0}, the function $x\mapsto\int_{\R^d} \frac{e^{-V(y)} \nabla V(y)}{|x-y|^{d-(2-\alpha)}}\, dy $ is bounded, and globally $(2-\alpha)$-H\"{o}lder continuous when $\alpha\in (1,2)$,
  globally $(1-\varepsilon)$-H\"{o}lder continuous for any $\varepsilon>0$ when $\alpha=1$, and belongs to $C^1(\R^d)$ when $\alpha\in (0,1)$,
  and
  hence $b_n(x)$ also shares these properties.
  Consider the following SDE
\begin{equation}\label{p2-1-1}
dX_t^{(n)}=b_n(X_t^{(n)})\,dt+dZ_t.
\end{equation}
It follows from \cite[Theorem 1.1]{Pr} or \cite[Corollary 1.4(i)]{CSZ} that the SDE \eqref{p2-1-1} has a unique strong solution, which will be denoted by $X^{(n)}:=(X^{(n)}_t)_{t\ge0}$.
Note that the infinitesimal generator of the process $X^{(n)}$ is given by
$$L^{(n)}f(x)=\langle b_n(x),\nabla f(x)\rangle-(-\Delta)^{\alpha/2}f(x),\quad f\in C_b^2(\R^d).$$
Hence, according to \cite[Theorem 1.5]{CZ} for $\alpha\in (1,2)$ and \cite[Theorem 1.1]{XZ1} for $\alpha=1$ as well as \cite[Theorem 2.2]{Kul} for $\alpha\in (0,1)$, the process $X^{(n)}$ has a  continuous and strictly positive transition density function, which implies that $X^{(n)}$  is strong Feller (i.e., for any $f\in B_b(\R^d)$ and $t>0$, $x\mapsto P^{(n)}_t f(x):=\Ee^x f(X_t^{(n)})$ is continuous)
 and Lebesgue irreducible (i.e., for any $t>0$ and open set $O\in \mathscr{B}(\R^d)$ with $\leb(O)>0$, $\Pp^x(X_t^{(n)}\in O)>0$). Here and in what follows, we assume that $X$ and $X^{(n)}$ are defined on the same probability space $(\Omega, \mathscr{F},\Pp)$. Let $\Pp^x(\cdot)=\Pp(\cdot|X_0=x)$ or $\Pp^x(\cdot)=\Pp(\cdot|X^{(n)}_0=x)$ without confusion.
Since
$b_n(x)=b(x)$ for all $|x|\le n$,
the law of $X_{t \wedge \tau_n}$ is the same as the law of $X_{t \wedge \tau_n}^{(n)}$ for any $t > 0$,
where $\tau_n:=\inf\{t>0:|X_t|\ge n\}$.

Now, let $(P_t)_{t\ge0}$ be the semigroup of the process $X$. For any $f\in B_b(\R^d)$, $x_0\in\R^d$ and for any sequence $\{x_k\}_{k\ge1}\subseteq \R^d$ such that $x_k \rightarrow x_0$ as $k \rightarrow \infty$, we choose $n$ large enough so that $\{x_k\}_{k\ge0}\subset B(0,n)$, and then find that
\begin{equation}\label{e:conv1}\begin{split}
 &|P_t f(x_k)-P_t f(x_0)|\\
 &=|\Ee^{x_{k}}f(X_t)-\Ee^{x_0}f(X_t)|\\
 &\le |\Ee^{x_{k}}(f(X_t)\I_{\{t<\tau_n\}})-\Ee^{x_0}(f(X_t)\I_{\{t<\tau_n\}})|+ \|f\|_{\infty}\big(\Pp^{x_k}(\tau_n\le t)+\Pp^{x_0}(\tau_n\le t)\big)\\
 &=   |\Ee^{x_{k}}(f(X^{(n)}_t)\I_{\{t<\tau_n\}})-\Ee^{x_0}(f(X^{(n)}_t)\I_{\{t<\tau_n\}})|+ \|f\|_{\infty}\big(\Pp^{x_k}(\tau_n\le t)+\Pp^{x_0}(\tau_n\le t)\big)\\
&\le |\Ee^{x_{k}}f(X^{(n)}_t)-\Ee^{x_0}f(X^{(n)}_t)|
+2\|f\|_{\infty}\big(\Pp^{x_k}(\tau_n\le t)+\Pp^{x_0}(\tau_n\le t)\big)\\
&\le |P_t^{(n)}f(x_k)-P_t^{(n)} f(x_0)|+4\|f\|_{\infty}\sup_{k\ge0}\Pp^{x_k}(\tau_n\le t).
\end{split}\end{equation}

Note that, combining Lemma \ref{L:4.1} with the standard argument (for example, see the proof of \cite[Theorem 2.1]{MT}), we can see that for any $k\ge0$ and $t>0$,
$$\Pp^{x_k}(\tau_n\le t)=\Pp^{x_k}(\max_{s\in [0,t]}|X_s|\ge n)=\Pp^{x_k}\left( \max_{s\in[0,t]}(1+|X_s|^2)^{\beta/2}\ge (1+|n|^2)^{\beta/2}\right) \le\frac{ c_1(1+|x_k|^2)^{\beta/2}}{(1+|n|^2)^{\beta/2}}.$$
Since $x_k \rightarrow x_0$ as $k \rightarrow \infty$, without loss of generality we may and will assume that $x_k\in B(x_0,1)$. Hence,
$$\lim_{n\to\infty}\sup_{k\ge0}\Pp^{x_k}(\tau_n\le t)=0.$$

Letting $k \rightarrow \infty$ and then $n \rightarrow \infty$ in \eqref{e:conv1}, we show that $$\lim_{k\to \infty}|P_t f(x_k)-P_t f(x_0)|=0.$$ Hence, for any $f\in B_b(\R^d)$ and $t>0$, $P_tf$ is a continuous function, i.e., the process $X$ is strong Feller.

For any $x\in \R^d$, $t>0$ and open set $O\in \mathscr{B}(\R^d)$ with $\leb(O)>0$, choosing $n$ large enough such that $\leb(O\cap B(0,n))>0$,
\begin{align*} \Pp^x(X_t\in O)\ge &\Pp^x(X_t\in O, \tau_n >t)=\Pp^x(X^{(n)}_t\in O\cap B(0,n), \tau_n >t).\end{align*}
According to (the proof of) \cite[Corollary 3.6]{CKS},
the Dirichlet heat kernel of the process $X^{(n)}$ is positive everywhere, and so the right hand side of the inequality above is positive (even though the setting of \cite{CKS} is restricted to $d\ge2$, the proof of \cite[Corollary 3.6]{CKS} is based on the global heat kernel estimates and the L\'evy system for $X^{(n)}$, both of which are available for $d=1$ too, and so \cite[Corollary 3.6]{CKS} holds true for all $d\ge1$).
Hence,
$\Pp^x(X_t\in O)>0$ and thus the process $X$ is Lebesgue irreducible.

Therefore, all compact sets are petite for $X$ (cf.\ \cite[Theorem 4.1(i)]{MT2}), and hence the existence of the invariant probability measure $\mu$ follows from \eqref{e:cougen1}, while the uniqueness is a direct consequence of the strong Feller property and irreducibility; see \cite[Theorems 5.1 and 5.2]{MT}.

{\rm(ii)} As we already established in the first part of the proof, according to \cite[Theorem 1.5]{CZ}, for any $t>0$, the law of $X_t^{(n)}$ is absolutely continuous with respect to the Lebesgue measure. We will claim that
the law of $X_t$ is also absolutely continuous with respect to the Lebesgue measure. Indeed, for any open set $O\in \mathscr{B}(\R^d)$ such that
$\leb(O)=0$, any $t>0$, $x \in \R^d$ and $n$ large enough,
\begin{align*} \Pp^x(X_t\in O)=&\Pp^x(X_t\in O, \tau_n >t)+\Pp^x(X_t\in O, \tau_n \le t) \\
= &\Pp^x(X^{(n)}_t\in O, \tau_n >t)+\Pp^x(X_t\in O, \tau_n \le t)\\
\le &\Pp^x(X^{(n)}_t \in O)+2\Pp^x( \tau_n \le t)= 2\Pp^x( \tau_n \le t).
\end{align*} As mentioned above, for any $x\in \R^d$ and $t>0$, $\Pp^x( \tau_n \le t)\to0$ as $n\to \infty$. Hence, $\Pp^x(X_t \in
O)=0$ for any $x\in \R^d$ and $t>0$.

Let $P(t,x,\cdot)$ be the transition function of the process $X$.
By the argument for the Lebesgue irreducibility above, we
know that $P(t,x,\cdot)$ and the Lebesgue measure are equivalent, so that
$P(t,x,A)=\int_A p(t,x,y)\,dy$ for any $A\in \mathscr{B}(\R^d)$ and $p(t,x,y)$ can be chosen to be strictly positive everywhere on $\R^d\times \R^d$ for any fixed $t>0$. Hence, for the invariant probability measure $\mu$,
since $\mu(A)=\int_{\R^d} P(t,x,A)\,d\mu(x)$ for $A\in \mathscr{B}(\R^d)$ and $t>0$, $\mu$ is also absolutely continuous with respect to the Lebesgue measure and the associated density function can be chosen to be strictly positive everywhere.
\end{proof}

\begin{proposition}\label{prop:inv} Let $X:=(X_t)_{t\ge0}$ be the
unique strong solution to the SDE \eqref{stableSDE}  with $b(x)$ defined by \eqref{specialDrift} when $d>2-\alpha$ and by \eqref{e:one} when $d\le 2-\alpha$ such that Assumption {\rm(A)} is satisfied.  Then, $\mu(dx):=Z^{-1} e^{-V(x)}\,dx$ with $Z=\int_{\R^d} e^{-V(x)}\,dx$ is the unique invariant probability measure for the process $X$.  \end{proposition}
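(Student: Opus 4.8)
Here is the plan. By Lemma~\ref{sde} the process $X$ already has a \emph{unique} invariant probability measure, and $\mu(dx)=Z^{-1}e^{-V(x)}\,dx$ is a genuine probability measure because $0<Z=\int_{\R^d}e^{-V}\,dx<\infty$ (under Assumption~(A), \eqref{e:1.1} forces $e^{-V}(r)\le Cr^{-(d+\alpha)}$ for large $r$, so $e^{-V}\in L^1(\R^d;dx)$). Hence it suffices to prove that $\mu$ is invariant, and for that I would use the standard reduction: since the strong solution of \eqref{stableSDE} is unique, the associated martingale problem is well posed, so (e.g.\ by the Echeverr\'ia--Weiss theorem) $\mu$ is invariant once
$$\int_{\R^d}Lf(x)\,e^{-V(x)}\,dx=0\qquad\text{for every }f\in C_c^\infty(\R^d);$$
for such $f$ one has $Lf\in C_b(\R^d)$, so every integral below is absolutely convergent. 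Morally this identity is just ${\rm div}(b\,e^{-V})=-(-\Delta)^{\alpha/2}e^{-V}$ tested against $f$; the point is to make it rigorous at the available regularity of $b$ and $e^{-V}$.

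For $d>2-\alpha$ I would combine the identity $\nabla\big((-\Delta)^{-(1-\alpha/2)}e^{-V}\big)=b\,e^{-V}$ from \eqref{exa-b}--\eqref{ecg} with the composition rule $(-\Delta)^{\alpha/2}f=(-\Delta)^{-(1-\alpha/2)}(-\Delta f)$ valid for $f\in C_c^\infty(\R^d)$. Writing $u:=(-\Delta)^{-(1-\alpha/2)}e^{-V}\in C^1(\R^d)$ and using the symmetry of the Riesz kernel (a Fubini argument, licit since $e^{-V}\in L^1(\R^d;dx)$, $-\Delta f$ is compactly supported, and $|x-y|^{-(d-2+\alpha)}$ is locally integrable) one obtains
$$\int_{\R^d}(-\Delta)^{\alpha/2}f\cdot e^{-V}\,dx=\int_{\R^d}(-\Delta)^{-(1-\alpha/2)}(-\Delta f)\cdot e^{-V}\,dx=\int_{\R^d}(-\Delta f)\,u\,dx=\int_{\R^d}\nabla f\cdot b\,e^{-V}\,dx,$$
the last step being one integration by parts together with $\nabla u=b\,e^{-V}$ (no boundary terms, as $u\in C^1$ and $f\in C_c^\infty$). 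Since $\int_{\R^d}\langle b,\nabla f\rangle e^{-V}\,dx$ equals the same quantity, $\int_{\R^d}Lf\,e^{-V}\,dx=0$ follows.

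For $d\le 2-\alpha$ (i.e.\ $d=1$, $\alpha\in(0,1]$) the argument is shorter since $e^{-V}\in C_b^2(\R)$: then $(-\Delta)^{\alpha/2}e^{-V}\in C_b(\R)\cap L^1(\R)$ by Remark~\ref{Rem-00}, so by \eqref{e:one}--\eqref{e:driftd1} the function $g(x):=b(x)\,e^{-V(x)}=-\int_{-\infty}^{x}(-\Delta)^{\alpha/2}e^{-V(u)}\,du$ is of class $C^1$ with $g'(x)=-(-\Delta)^{\alpha/2}e^{-V(x)}$; using the self-adjointness of $(-\Delta)^{\alpha/2}$ between $f\in C_c^\infty(\R)$ and $e^{-V}\in C_b^2(\R)$ and one integration by parts in $\int_\R f'g\,dx$,
$$\int_\R Lf\,e^{-V}\,dx=-\int_\R f\,(-\Delta)^{\alpha/2}e^{-V}\,dx-\int_\R f\,g'\,dx=0.$$

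The main obstacle is the middle step in the regime $\alpha\in[1,2)$: there $e^{-V}$ is in general only $C^1$, so $(-\Delta)^{\alpha/2}e^{-V}$ need not be given by an absolutely convergent principal-value integral and one cannot naively ``integrate by parts in the fractional Laplacian''; instead the whole computation must be routed through the Riesz potential $u=(-\Delta)^{-(1-\alpha/2)}e^{-V}$, with the Fubini and self-adjointness steps justified as indicated. The passage from the stationarity identity to invariance of $\mu$, and then to $\mu$ being \emph{the} invariant measure, is routine given the well-posedness of the martingale problem and Lemma~\ref{sde}.
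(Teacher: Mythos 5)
Your proposal is correct and follows essentially the same route as the paper: reduce to showing $\int_{\R^d}Lf\,e^{-V}\,dx=0$ for $f\in C_c^\infty(\R^d)$, then for $d>2-\alpha$ route the computation through the Riesz potential $u=(-\Delta)^{-(1-\alpha/2)}e^{-V}$ (precisely because $e^{-V}$ is only $C^1$ when $\alpha\in[1,2)$), using the composition rule, symmetry of the Riesz kernel, and a single integration by parts, and for $d\le2-\alpha$ integrate by parts once in $x$ and use self-adjointness of $(-\Delta)^{\alpha/2}$. The paper runs the $d>2-\alpha$ chain in the opposite direction (starting from the drift term, using $\Delta=(-\Delta)^{\alpha/2}(-\Delta)^{1-\alpha/2}$ and the Green-function identity $(-\Delta)^{1-\alpha/2}u=e^{-V}$) and passes from infinitesimal to genuine invariance via the core criterion in Liggett's book together with a localization argument establishing $C_c^\infty(\R^d)\subset D(L)$, whereas you invoke well-posedness of the martingale problem and Echeverr\'ia's criterion; both justifications are standard and equivalent here, though the paper is somewhat more explicit about why $C_c^\infty$ is a suitable test class. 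Your treatment of the one-dimensional case is in fact slightly more streamlined, since you work directly with $g=b\,e^{-V}$ from \eqref{e:one} without needing the identity $\int_0^\infty(-\Delta)^{\alpha/2}e^{-V(z)}\,dz=0$ and the symmetrized form \eqref{e:driftd1} that the paper employs.
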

\begin{proof}

Recall that the infinitesimal generator of the process $(X_t)_{t\ge0}$ is given by
$$Lf(x)=-(-\Delta)^{\alpha/2} f(x)+\langle b(x), \nabla f(x)\rangle.$$
Let $D(L)$ be the domain of the operator under the norm $\|\cdot\|_\infty$. Then,
if $\mu$ is an invariant measure for $(P_t)_{t \ge 0}$,
for any $f\in D(L)$,
\begin{equation}\label{e:invari}\mu(Lf)=\mu\left(\lim_{t\to0}\frac{ P_tf-f}{t}\right)=\lim_{t\to 0} \frac{\mu(P_tf)-\mu(f)}{t}=0.\end{equation} Actually, \eqref{e:invari} is equivalent to saying that $\mu$ is an invariant probability measure of the process $X$
and this is still true if we replace $D(L)$ with a core;
see e.g.\ \cite[Theorem 3.37]{Li}.

According to \cite[Theorem 1.5]{CZ}, $C_b^2(\R^d)$ is contained in the domain of the infinitesimal generator of the process $X^{(n)}$ given by the SDE \eqref{p2-1-1}.  Then, by the localization argument that we used in the proof of the strong Feller property above, we can check that $C_c^\infty(\R^d)\subset D(L)$.
In the following, we take $\mu(dx):=Z^{-1} e^{-V(x)}\,dx$ with $Z=\int_{\R^d} e^{-V(x)}\,dx$, and verify that for any $f\in C_c^\infty(\R^d)$,
$\mu(Lf)=0$.

Let us first suppose that $d>2-\alpha$. Then, for $b(x)$ defined by \eqref{specialDrift}  and for any $f\in C_c^\infty(\R^d)$,
\begin{equation}\label{e:kkss}\begin{split}\int_{\R^d} Lf(x) e^{-V(x)}\,dx&=-\int_{\R^d} e^{-V(x)}(-\Delta)^{\alpha/2} f(x)\,dx+\int_{\R^d}e^{-V(x)}\langle\nabla f(x),b(x)\rangle\,dx\\
&=-\int_{\R^d} e^{-V(x)}(-\Delta)^{\alpha/2} f(x)\,dx\\
&\quad+C_{d,2-\alpha}\int_{\R^d}\left\langle\nabla f(x),\nabla \left[\int_{\R^d} \frac{ e^{-V(y)}}{|\cdot-y|^{d-(2-\alpha)}}\,dy\right](x)\right\rangle\,dx. \end{split}\end{equation} On the other hand, by the integration by parts, we find that for any $f\in C_c^\infty(\R^d)$,
\begin{align*}&C_{d,2-\alpha}\int_{\R^d}\left\langle\nabla f(x),\nabla \left[\int_{\R^d} \frac{ e^{-V(y)}}{|\cdot-y|^{d-(2-\alpha)}}\,dy\right](x)\right\rangle\,dx\\
&=C_{d,2-\alpha}\int_{\R^d} (-\Delta) f(x)   \int_{\R^d} \frac{ e^{-V(y)}}{|x-y|^{d-(2-\alpha)}}\,dy \,dx\\
&=C_{d,2-\alpha}\int_{\R^d} (-\Delta)^{1-\alpha/2} \big[(-\Delta)^{\alpha/2}f\big](x)  \int_{\R^d} \frac{ e^{-V(y)}}{|x-y|^{d-(2-\alpha)}}\,dy \,dx\\
&=C_{d,2-\alpha}\int_{\R^d} (-\Delta)^{\alpha/2}f (x) \cdot (-\Delta)^{1-\alpha/2} \left[\int_{\R^d} \frac{ e^{-V(y)}}{|\cdot-y|^{d-(2-\alpha)}}\,dy\right](x) \,dx\\
&= \int_{\R^d} e^{-V(x)} (-\Delta)^{\alpha/2}f (x) \,dx,\end{align*} where in the second equality we used the fact that $(-\Delta)= (-\Delta)^{\alpha/2} (-\Delta)^{1-\alpha/2}$ (which can be checked by the standard Fourier analysis),
the third equality follows from the symmetry of $(-\Delta)^{1-\alpha/2}$ on $L^2(\R^d;dx)$, and in the fourth equality we used the fact that $\frac{C_{d,2-\alpha}}{|x-y|^{d-(2-\alpha)}}$ is the Green function for the symmetric $(2-\alpha)$-stable process, and hence for all $x\in \R^d$,
$$  (-\Delta)^{1-\alpha/2}\left[\int_{\R^d} \frac{ C_{d,2-\alpha}e^{-V(y)}}{|\cdot-y|^{d-(2-\alpha)}}\,dy\right](x)=e^{-V(x)},$$
cf.\ \cite[Proposition 7.2]{Kwa}.
The equality above along with \eqref{e:kkss} yields that $\int_{\R^d} Lf(x) e^{-V(x)}\,dx=0$, and so the desired assertion follows.

Now, we consider the case that $d\le 2-\alpha$; i.e., $d=1$ and $\alpha\in (0,1]$. For $b(x)$ defined by \eqref{e:one}, using \eqref{e:driftd1}, we have for any $f\in C_c^\infty(\R)$,
\begin{align*}&\int_{\R} Lf(x) e^{-V(x)}\,dx\\
&=-\int_{\R} e^{-V(x)}(-\Delta)^{\alpha/2} f(x)\,dx+\int_{0}^\infty f'(x)\int_x^\infty (-\Delta)^{\alpha/2}e^{-V(z)}\,dz \,dx\\
&\quad-\int_{-\infty}^0 f'(x) \int_{-x}^\infty (-\Delta)^{\alpha/2}e^{-V(z)}\,dz\,dx\\
&=-\int_{\R} e^{-V(x)}(-\Delta)^{\alpha/2} f(x)\,dx+\int_{0}^\infty f(x)(-\Delta)^{\alpha/2}e^{-V(x)}\,dx+\int_{-\infty}^0 f(x)  (-\Delta)^{\alpha/2}e^{-V(-x)}\,dx.\\
&=-\int_{\R} e^{-V(x)}(-\Delta)^{\alpha/2} f(x)\,dx+ \int_{\R} e^{-V(x)}(-\Delta)^{\alpha/2} f(x)\,dx=0, \end{align*} where in the second equality we used the fact that $\int_0^\infty  (-\Delta)^{\alpha/2}e^{-V(z)}\,dz=0$ (cf.\
\eqref{e:integralZero}) and the third equality follows from the fact that $ (-\Delta)^{\alpha/2}e^{-V(-x)}= (-\Delta)^{\alpha/2}e^{-V(x)}$ for all $x\le 0$ due to the symmetry of $V(x)$.

Therefore, according to both conclusions above and Lemma \ref{sde}, we prove that $\mu(dx):=Z^{-1} e^{-V(x)}\,dx$ is the unique invariant probability measure of the process $X$.
\end{proof}

\begin{remark}\label{e:remk--}
When $d>2-\alpha$, by some elementary calculations, the dual of the operator $L$ on $L^2(\R^d;dx)$ is given by
\begin{align*}L^*f(x)= & -(-\Delta)^{\alpha/2} f(x)-\langle b(x), \nabla f(x)\rangle-\D b(x) f(x)\\
= & -(-\Delta)^{\alpha/2} f(x)-\D (b f)(x).\end{align*}
Arguing informally,
we have \begin{align*} \D (b e^{-V})(x)&={\rm div}[\nabla (-\Delta)^{-(1-\alpha/2)}e^{-V}](x)
  =[\Delta(-\Delta)^{-(1-\alpha/2)}e^{-V}](x)\\
  =&-(-\Delta)(-\Delta)^{-(1-\alpha/2)}e^{-V}(x) =-(-\Delta)^{\alpha/2}e^{-V}(x),\end{align*} and so, by \eqref{exa-b},
$L^* e^{-V}(x)=0$ for $x\in \R^d$,
which would imply the infinitesimal invariance of $\mu$ given by \eqref{Gibbs} for the process $(X_t)_{t \ge 0}$ defined by \eqref{stableSDE},
cf.\ the proof of \cite[Theorem 1.1]{Simsekli2017}.
 However, since we do not know whether $\nabla (-\Delta)^{-(1-\alpha/2)}e^{-V}$ belongs to $C^1(\R^d)$ or not when $\alpha\in (1,2)$ (cf.\ Remark \ref{remark:notC1}), $\D\nabla (-\Delta)^{-(1-\alpha/2)}e^{-V}$ may be not well defined. Hence the argument above is informal and, in order to rigorously prove that $\mu$ is the unique invariant measure of $(X_t)_{t \ge 0}$, it is necessary to argue as in the proof of Proposition \ref{prop:inv}.
\end{remark}

Using Lemma \ref{sde} and Proposition \ref{prop:inv}, we can now easily prove Theorem \ref{mainResult}.

\begin{proof}[Proof of Theorem $\ref{mainResult}$]
	From Lemma \ref{sde}, we know that the process $X := (X_t)_{t \ge 0}$ obtained as the unique solution to the SDE \eqref{p2-1-00} is strong Feller and irreducible. Hence, due to \cite[Theorem 4.1(i)]{MT2},
all compact sets are petite for $X$. Moreover,
according to Lemma \ref{L:4.1}, we have the Lyapunov condition \eqref{e:cougen1}. As a consequence, \cite[Theorem 6.1]{MT} applies, and so
	there is a constant $\lambda > 0$
	such that for any $x\in \R^d$ and $t>0$,
$$\|P(t,x,\cdot)-\mu\|_{{\rm Var},V_0}\le C(x) V_0(x) e^{-\lambda t},$$
	where $V_0(x)=(1+|x|^2)^{\beta/2}$ with $\beta\in (0,\alpha)$,
$C(x)$ is a non-negative and locally bounded function on $\R^d$,
 and $\mu$ is the unique invariant probability measure for $X$. Finally, from Proposition \ref{prop:inv} we know that $\mu$ is given by \eqref{Gibbs}, and the proof is concluded.
\end{proof}

\bigskip

\noindent{\bf Acknowledgement.}
Mateusz B.\ Majka would like to thank Aleksandar Mijatovi\'{c} for discussions regarding Fractional Langevin Monte Carlo, and Jian Wang would like to thank Professor Renming Song and Dr.\ Longjie Xie for helpful comments on heat kernel estimates for SDEs with L\'evy jumps.
The research of Lu-Jing Huang is supported by the National Natural Science Foundation of China (No.\ 11901096).
 A part of this work was completed while Mateusz B.\ Majka was affiliated to the University of Warwick and supported by the EPSRC grant no.\ EP/P003818/1. The research of Jian Wang is supported by the National Natural Science Foundation of China (No.\ 11831014),
the Program for Probability and Statistics:
Theory and Application (No.\ IRTL1704),
and the Program for Innovative Research Team in Science and Technology
in Fujian Province University (IRTSTFJ).

\end{document}